\documentclass[11pt]{article}
\usepackage{xcolor}
\usepackage{amsfonts}
\usepackage{amssymb}
\usepackage{epsfig}
\usepackage{CJK}
 \usepackage{setspace}

\setlength{\oddsidemargin}{0mm} \setlength{\evensidemargin}{0mm}
\setlength{\topmargin}{-15mm} \setlength{\textheight}{220mm}
\setlength{\textwidth}{155mm}
\usepackage{amsthm}
\usepackage{amsmath,bm}
\usepackage{graphicx}
\usepackage[citecolor=red]{hyperref}
\usepackage{cite}

 \newtheorem{thm}{Theorem}[section]
 
 \newtheorem{lem}[thm]{Lemma}
 \newtheorem{prop}[thm]{Proposition}
 \theoremstyle{definition}
 
 \newtheorem{rem}[thm]{Remark}
 \numberwithin{equation}{section}
 \def\bR{\mathbb{R}}

\def\inte#1{
\displaystyle\mathop{#1\kern0pt}^\circ }


\let\p=\partial

\let\f=\frac

\let\p=\psi

\let\D=\Delta


\def\pa{\partial}


\def\virgp{\raise 2pt\hbox{,}}
\def\cdotpv{\raise 2pt\hbox{;}}

\def\C{\mathop{\mathbb C\kern 0pt}\nolimits}
\def\DD{\mathop{\mathbb D\kern 0pt}\nolimits}
\def\EE{\mathop{{\mathbb E \kern 0pt}}\nolimits}
\def\K{\mathop{\mathbb K\kern 0pt}\nolimits}
\def\N{\mathop{\mathbb N\kern 0pt}\nolimits}
\def\Q{\mathop{\mathbb Q\kern 0pt}\nolimits}
\def\R{\mathop{\mathbb R\kern 0pt}\nolimits}
\def\SS{\mathop{\mathbb S\kern 0pt}\nolimits}
\def\ZZ{\mathop{\mathbb Z\kern 0pt}\nolimits}
\def\TT{\mathop{\mathbb T\kern 0pt}\nolimits}
\def\P{\mathop{\mathbb P\kern 0pt}\nolimits}

\def\dv{\mbox{div}}


\def\na{\nabla}
\def\p{\partial}

\newtheorem{theorem}{Theorem}[section]

\newtheorem{Lemma A.1}{Lemma A.1}
\theoremstyle{definition}
\newtheorem{definition}[theorem]{Definition}

\theoremstyle{remark}


\newcommand{\Real}{\mathbb R}
\newcommand{\eps}{\epsilon}

\newcommand{\beq}{\begin{equation}}
\newcommand{\eeq}{\end{equation}}
\newcommand{\ben}{\begin{eqnarray}}
\newcommand{\een}{\end{eqnarray}}
\newcommand{\beno}{\begin{eqnarray*}}
\newcommand{\eeno}{\end{eqnarray*}}

\begin{document}
\title{On the energy method for the global solutions to the three dimensional incompressible non-resistive MHD  near equilibrium}

\author{Yuan Cai\footnote{School of Mathematical Sciences, Fudan University, Shanghai, 200433, P. R. China.  Email: caiy@fudan.edu.cn}
\and
Bin Han\footnote{School of Mathematical Sciences, Hangzhou Dianzi  University,
Hangzhou, 310018, P. R. China. Email: hanbin@hdu.edu.cn.}
\and
Na Zhao\footnote{School of Mathematics, Shanghai  University of Finance and Economics, Shanghai, 200433, P. R. China. Email: zhaona@shufe.edu.cn.}
}

\date{}
\maketitle
\begin{abstract}
We prove the global existence of the smooth solutions near equilibrium to
the Cauchy problem of the incompressible non-resistive magnetohydrodynamic equations in the whole three  dimensional space under some admissible condition. The result has been obtained by Xu and Zhang (SIAM J. Math. Anal. 47: 26--65, 2015) in anisotropic Besov space framework. In this paper, we provide a new proof based on the temporal weighted energy method.
\end{abstract}

{\bf Key words.}  MHD, Lagrangian coordinates, global well-posedness, energy estimates\\

{\bf AMS subject classifications.} 35Q30, 76D03, 76N10, 76W05

\section{Introduction}
The magnetohydrodynamic systems are fundamental equations in magnetohydrodynamics (MHD)
where the study of this field was initiated by Hannes Alfv\'en \cite{Al, Al2} who won Nobel Prize in 1970.  
They reflect the basic physical laws governing the motion of electrically conducting fluids, such as plasma, liquid metals and electrolytes. The MHD equations share similarities with the Navier-Stokes equations, but they contain richer mathematical structure.
In this article, we consider the global existence of strong solutions to the following three dimensional incompressible viscous and non-resistive magnetohydrodynamic system
\begin{equation}\label{A1}
\begin{cases}
\p_t u+u\cdot \na u-\Delta u+\nabla p=b\cdot \nabla b,\\
\p_t b+u\cdot\nabla b=b\cdot\nabla u, \\
\hbox{div}\, u=\hbox{div}\, b=0, \\
(b,u)|_{t=0}=(b_0,u_0),
\end{cases}
\end{equation}
where $b=(b_1,b_2,b_3)^{\top}, u=(u_1,u_2,u_3)^{\top}$  represent the magnetic field and velocity field respectively, $p$ is the scalar pressure.
The velocity field obeys the Navier-Stokes equations with Lorentz force. 
 The magnetic field satisfies  the non-resistive Maxwell-Faraday equations which describe the Faraday's law of induction. The viscous non-resistive MHD system is not merely a combination of the Navier-Stokes and the transport equations but an interactive and integrated system. Mathematically, due to the losing resistivity of the magnetic equations, it is difficult to control the Lorentz force in the momentum equations.
One may check the references \cite{Bis,Davi,PF} for detailed explanations to this system.

Let us briefly  recall some well-known results to MHD systems.
Firstly, in the case of $\Real^d$, for  the viscous and resistive homogeneous MHD system, Duvaut and Lions \cite{DL} established the global
existence and uniqueness of the solution in classical Sobolev spaces
for small initial data.
The local well-posedness of classical solutions for fully viscous MHD systems
was established by Sermange and Temam \cite{ST}, in which the global well-posedness was also proved in two
dimensions.

For the viscous and non-resistive problem, Lin and Zhang \cite{LZ14} established the global solutions for a
three dimensional  MHD model with initial data sufficiently close to the equilibrium state.
We also refer to  Lin and Zhang \cite{LZ} for an elementary proof.
For the physical  system \eqref{A1} in two dimensional case,  Lin, Xu and Zhang \cite {LXZ1} constructed  the global  smooth solutions  around the equilibrium
by imposing  some admissible conditions.
Later on,  the global existence of small solutions without imposing such admissible conditions on the initial magnetic field was obtained by Ren, Wu, Xiang and Zhang  \cite{RWXZ} (see \cite{ZhangT} for a simplified proof).
 For system \eqref{A1} in three dimensional case, the global well-posedness result  was obtained by Xu and Zhang  \cite{XZ}  by introducing the Lagrangian reformulation of the problem, and by imposing  some admissible conditions to the initial magnetic field 
 as in \cite{LXZ1}.  Such admissible conditions were removed  in \cite{AZ} by Abidi and Zhang
under a more intrinsic Lagrangian reformulation.
 The  existence of global solutions  in periodic domain was obtained by Pan, Zhou and Zhu \cite{PZZ}.
 On the other hand,
 Zhang  \cite{ZhangT2} considered the two dimensional case where the background magnetic field is $(\epsilon^{-1},0)$.  Zhai and  Zhang \cite{CuiZ} studied the stability problem when the solution is sufficiently close to a special solution with linearly growing velocity.
 With some odevity conditions, Jiang and Jiang \cite{JiangJ} proved the existence and uniqueness of strong solutions with some large initial perturbations in two dimensional periodic domains under Lagrangian coordinates. In addition, under the axially symmetric setting,  Lei \cite{Lei} proved that  the $H^2$ initial data can generate a unique global large solution of MHD system \eqref{A1}. Lei and  Zhou \cite{LeiZ} constructed the global weak solutions for the two dimensional incompressible resistive MHD system.
 For the local in time existence of low regularity solutions to the three dimensional incompressible non-resistive MHD,
 Chemin, McCormick, Robinson and Rodrigo \cite{CMRR} proved the sharp local well-posedness in Besov spaces.
Fefferman, McCormick, Robinson and Rodrigo \cite{Fe1, Fe2} obtained the local in time existence result in nearly optimal Sobolev spaces.




For the two dimensional  viscous and non-resistive compressible MHD system,  the global existence result of classical solutions was established by Wu, Wu \cite{WW} in whole space and by Wu,  Zhu \cite{WZ-2} in periodic domain.  In the three dimensional case,  Hu and Wang \cite{HW} studied the existence and large time behavior of global weak solutions in a bounded domain with large data.  Tan and Wang \cite{TW} considered the  global well-posedness of the non-resistive MHD system in a flat domain $\R^2\times (0,1)$ with  vertical background  magnetic field.

For the ideal conducting fluid,  Bardos, Sulem and Sulem \cite{BSS} proved the existence of global solutions with small initial data to the MHD equations  which subject to a strong magnetic field. The global in time vanishing viscosity limit of the full diffusive MHD system to the ideal equations was obtained
 by  He, Xu, Yu \cite{HXY}, Cai, Lei \cite{CaiLei} and Wei, Zhang \cite{WeiZ}.

On the other hand, many efforts have been made on the  mixed partial dissipation and  partial magnetic diffusion in the two or three dimensional MHD system. In \cite{CRW}, Cao, Regmi and Wu established the global bound in Lebesgue spaces  to the two dimensional incompressible  MHD equations with horizontal dissipation and horizontal magnetic diffusion.  The mixed partial dissipation and magnetic diffusion and only magnetic diffusion cases were studied by Cao, Wu \cite{CW} and by Cao, Wu, Yuan\cite{CWY}. The global solutions to the two dimensional incompressible  MHD equations with only magnetic diffusion in periodic domain were proved by Zhou and Zhu \cite{ZZ}.
 Recently,  Wu and Zhu \cite{WZ} constructed the global solutions in the whole space   with horizontal dissipation and  vertical magnetic diffusion near equilibrium. For more studies on MHD, we refer to \cite{AP,CZZ,DZ,DLW,DLW18,HanZhao,HanZhao2,Hu,HL,HLi,JZ,JNW,RWXZ2,WeiZ1,WeiZ2,Wu18,WWX,YZ,Ye} and the references therein.

Before we state our main result, we recall the admissible condition by Lin, Xu, Zhang \cite{LXZ1} and Xu, Zhang \cite{XZ}.
\begin{definition}\label{def1}
	Let $b_0=(b_0^1,b_0^2,b_0^3)$ be a smooth enough vector field. We define its trajectory $\widetilde{X}(t,y)$ by
	\begin{align*}
		\begin{cases}
				\frac{\mathrm{d}}{\mathrm{d}t}\widetilde{X}(t,y)=b_0(\widetilde{X}(t,y)),\\
			\widetilde{X}(0,y)=y.
		\end{cases}
	\end{align*}
	We call that $f$ and $b_0$ are admissible on a domain $D$ of $\mathbb{R}^3$ if there holds
	\begin{align*}
		\int_\mathbb{R}f(\widetilde{X}(t, y))\mathrm{d}t=0
	\end{align*}
	for all $y\in D$.
\end{definition}
The main result of this paper is stated as follows.
\begin{thm}\label{MTH}
Let $e_1=(1,0,0)^\top$,
 $u_0\in H^3(\Real^3)$, $b_0-e_1\in H^3(\Real^3)$ with $\mathrm{div}\, u_0=\mathrm{div}\, b_0=0$. Assume that $b_0-e_1$ and $b_0$ are admissible
 on $\{0\}\times\mathbb{R}^2$ in the sense of Definition \ref{def1} and
 $\mathrm{supp} (b_0-e_1)
 (\cdot, x_2, x_3)\subset [-K, K]$ for
 some positive constant $K$. Then
 there exists a sufficiently small positive constant
$\eps_0$  such that if
\begin{align*}
\|u_0\|_{H^3}+\|b_0-e_1\|_{H^3}\leq  \eps_0,
\end{align*}
\eqref{A1} has a unique global solution $(u,b)$ such that for any $T>0$,
$$u\in C([0,T];H^3(\Real^3)),\quad \nabla u\in L^2(0,T; H^3(\Real^3)), \quad b-e_1  \in C([0,T];H^3(\Real^3)).$$
\end{thm}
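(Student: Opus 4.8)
The plan is to recast \eqref{A1} as a perturbation problem about the equilibrium $(u,b)=(0,e_1)$. Setting $b=e_1+a$ with $\operatorname{div}a=0$, the system reads
\begin{equation*}
\begin{cases}
\partial_t u+u\cdot\nabla u-\Delta u+\nabla p=\partial_1 a+a\cdot\nabla a,\\
\partial_t a+u\cdot\nabla a=\partial_1 u+a\cdot\nabla u,\\
\operatorname{div}u=\operatorname{div}a=0,
\end{cases}
\end{equation*}
so that at the linear level the Leray projection of $u$ obeys the degenerate damped wave equation $\partial_t^2 u-\Delta\partial_t u-\partial_1^2 u=0$: the background field $e_1$ furnishes wave propagation and effective damping only along $x_1$. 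Local existence and uniqueness in $C([0,T];H^3)$ with $\nabla u\in L^2(0,T;H^3)$ is standard for viscous non-resistive MHD, so the entire task is to produce \emph{time-uniform} a priori bounds and close by a continuity argument. I would fix the energy $E(t)=\|u(t)\|_{H^3}^2+\|a(t)\|_{H^3}^2$ and the viscous dissipation $\int_0^t\|\nabla u\|_{H^3}^2\,ds$, and on the maximal interval $[0,T^\ast)$ run a bootstrap: assuming these are $\le C\eps_0^2$, upgrade the bound to $\le C\eps_0^2/2$ for $\eps_0$ small, which precludes blow-up and yields global existence.

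The first ingredient is the symmetric $H^3$ energy identity. Applying $\partial^\alpha$ for $|\alpha|\le 3$ and pairing the velocity equation with $\partial^\alpha u$ and the magnetic equation with $\partial^\alpha a$, the linear coupling cancels,
\begin{equation*}
\int\partial^\alpha(\partial_1 a)\cdot\partial^\alpha u\,dx+\int\partial^\alpha(\partial_1 u)\cdot\partial^\alpha a\,dx=\int\partial_1\bigl(\partial^\alpha a\cdot\partial^\alpha u\bigr)\,dx=0,
\end{equation*}
leaving $\tfrac{d}{dt}E+c\|\nabla u\|_{H^3}^2\lesssim(\text{nonlinear})$. This delivers dissipation for $u$ alone; the non-resistive field $a$ acquires none at this level, which is the structural heart of the difficulty.

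To unlock the hidden \emph{directional} dissipation for $a$ I would introduce an interaction functional $\mathcal I(t)=\sum_{|\alpha|\le m}\int\partial^\alpha u\cdot\partial_1\partial^\alpha a\,dx$ at an order $m<3$ dictated by the available regularity. Using $\partial_t a=\partial_1 u+\cdots$ and $\partial_t u=\Delta u-\nabla p+\partial_1 a+\cdots$, and discarding the pressure via $\operatorname{div}a=0$, one computes
\begin{equation*}
\frac{d}{dt}\mathcal I=\|\partial_1 a\|_{H^m}^2-\|\partial_1 u\|_{H^m}^2+\sum_{|\alpha|\le m}\int\Delta\partial^\alpha u\cdot\partial_1\partial^\alpha a\,dx+(\text{nonlinear}),
\end{equation*}
where the viscous term is absorbed by Young's inequality into $\|\partial_1 a\|_{H^m}^2$ and the dissipation $\|\nabla u\|_{H^3}^2$. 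Since $|\mathcal I|\lesssim E$, integrating in time converts this into the crucial bound $\int_0^t\|\partial_1 a\|_{H^m}^2\,ds\lesssim E(0)+\sup_{[0,t]}E+\int_0^t\|\nabla u\|_{H^3}^2\,ds+(\text{nonlinear})$: the field $a$ inherits dissipation in the $x_1$-direction through its coupling to the viscous velocity. The restriction $m<3$ (the viscous term costs one derivative more than the dissipation supplies) forces the top-order part of $a$ to be handled separately, by interpolation against the time-weighted lower-order decay described below.

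The genuine obstacle --- and precisely where the admissibility and $x_1$-support hypotheses enter --- is that this interaction controls only $\partial_1 a$, never the part of $a$ whose variation is transverse to $e_1$; this anisotropy is what forced Xu and Zhang into anisotropic Besov spaces. To remain within isotropic Sobolev energies I would invoke the hypotheses as a directional recovery inequality: the compact support of $b_0-e_1$ in the $x_1$-variable, $\operatorname{supp}(b_0-e_1)(\cdot,x_2,x_3)\subset[-K,K]$, yields via the fundamental theorem of calculus and Cauchy--Schwarz a Poincar\'e bound $\|a\|\lesssim_K\|\partial_1 a\|$, while the admissibility condition of Definition \ref{def1} is the flow-invariant (Lagrangian) expression guaranteeing that this zero-$x_1$-frequency cancellation \emph{persists} under the transport by $u$. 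Propagating this structure in Eulerian variables, and balancing the attendant (possibly growing) support constant against the decay, is exactly the role of the temporal weights: introducing weights $(1+s)^\sigma$ on the lower-order energies encodes the algebraic decay of the degenerate linear semigroup and renders the quadratic nonlinearities $u\cdot\nabla u$, $a\cdot\nabla a$, $u\cdot\nabla a$, $a\cdot\nabla u$ integrable in time, so that a closed inequality of the form $\mathcal E_\sigma(t)\lesssim\eps_0^2+\mathcal E_\sigma(t)^{3/2}$ emerges for the total weighted energy $\mathcal E_\sigma$. I expect the main hurdles to be the correct choice of weight $\sigma$ against the degenerate transverse decay, the propagation of the admissibility/mean-zero structure without Lagrangian coordinates, and the matching of orders in the interaction estimate at the top of the $H^3$ scale.
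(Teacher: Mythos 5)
Your proposal contains a genuine gap at the step you yourself flag as a ``hurdle'': the claimed directional recovery inequality $\|a\|\lesssim_K\|\partial_1 a\|$ for $t>0$. At $t=0$ this Poincar\'e bound does follow from the support hypothesis, but it does not persist: $a=b-e_1$ does not solve a pure transport equation --- it solves $\partial_t a+u\cdot\nabla a=\partial_1 u+a\cdot\nabla u$, whose source terms are not compactly supported in $x_1$, so the $x_1$-support of $a$ is destroyed instantly. Moreover, the admissibility condition of Definition \ref{def1} is not a ``flow-invariant expression'' of any mean-zero structure along the fluid flow of $u$: it is a condition on integrals along the trajectories of the \emph{steady} initial field $b_0$, and its only function is to make the straightening problem solvable, i.e.\ to guarantee the existence of a volume-preserving diffeomorphism $X_0$ with $A_0^\top(y)b_0(X_0(y))=e_1$, $\det(\nabla_y X_0)=1$, as in \eqref{eq-com}. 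Once the Poincar\'e step is removed, nothing in your scheme controls the transverse (zero-$\partial_1$-frequency) part of $a$: the cross functional $\mathcal I$ yields time-integrated dissipation only for $\partial_1 a$, and only at order $m<3$, while the nonlinearity $a\cdot\nabla a$ in the velocity equation is quadratic in a quantity with neither dissipation nor decay, so the weighted closure $\mathcal E_\sigma(t)\lesssim\eps_0^2+\mathcal E_\sigma(t)^{3/2}$ cannot be established as written. This transverse obstruction is precisely what pushed Ren--Wu--Xiang--Zhang (2D) and Xu--Zhang (3D) to anisotropic Littlewood--Paley/Besov machinery rather than isotropic Eulerian energies.

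The paper's route is structurally different exactly where your proposal breaks. The admissibility and support hypotheses are consumed once, at $t=0$, to construct $X_0$ straightening $b_0$; after passing to Lagrangian coordinates, the identity $A^\top(t,y)\,b(t,X(t,y))=e_1$ is \emph{automatically conserved} (it is the Lagrangian form of the Faraday law, see \eqref{eqAbt}), whence $b(t,X(t,y))=\partial_{y_1}X$ for all time with no inequality to propagate. The system then becomes the quasilinear degenerate damped wave equation \eqref{A4} for $Y=X-y$, whose crucial feature is that every nonlinear term carries a factor $\nabla Y_t$ (the pressure being quadratic in $Y_t$, $\partial_1 Y$ with derivative structure), so the temporal weighted energy --- with weights $(t+1)$ and $(t+1)^2$ attached to mixed $\partial_1$/$\partial_t$ derivatives, matching your linear-level intuition about the operator $\partial_t^2-\Delta\partial_t-\partial_1^2$ --- closes and yields $\int_0^\infty\|\nabla_y Y_t\|_{L^\infty}\,\mathrm{d}t\lesssim\eps_0^{1/2}$. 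Globality in Eulerian variables then follows from the Xu--Zhang continuation criterion (Proposition \ref{p5.1}). If you wish to salvage an Eulerian argument, you would need a substitute mechanism for the transverse component of $a$ (anisotropic norms or a Lagrangian-type unknown such as the displacement whose $\partial_1$-derivative is $a$), not the support hypothesis.
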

\begin{rem}
The theorem can be regarded as a new proof of the
global existence of the smooth solutions near equilibrium to
the Cauchy problem of the incompressible non-resistive magnetohydrodynamic equations in the whole three  dimensional space under the admissible condition defined in Definition \ref{def1}.
The result has been obtained by Xu and Zhang (SIAM J. Math. Anal. 47: 26--65, 2015) in anisotropic Besov space framework. We also mention that the admissible condition has been removed by  Abidi-Zhang \cite{AZ}.
\end{rem}

The proof of the main theorem will be conducted in Lagrangian coordinates.
Same to the transformation in \cite{LXZ1,XZ}, we define the flow map $X(t,y)$ by
\begin{equation*}
\begin{cases}
\f{\mathrm{d}}{\mathrm{d}t} X(t,y)=u(t,X(t,y)),\\
X(0,y)=X_0(y),
\end{cases}
\end{equation*}
where $X_0(y):\mathbb{R}^3\rightarrow \mathbb{R}^3$ is an invertible map. If $b_0$ satisfies the assumption in Theorem \ref{MTH}, then there exists a $X_0(y)$ such that
\begin{align}\label{eq-com}
	A^\top_0(y)b_0(X_0(y))=e_1,\quad \det (\nabla_y X_0)=1.
\end{align}
Here $A(t, y)$ is denoted by
	\begin{equation}\label{dfnA}
		A(t,y)=\left(\f{\p X(t,y)}{\p y}\right)^{-\top}
	\end{equation}
and $A_0(y)=A(0,y)$.
We refer to \cite{XZ}  for the details of the derivation.
The two dimensional version can be found in \cite{LXZ1}.

Then under the  Lagrangian coordinates, \eqref{A1} becomes
\begin{equation}\label{A3}
\begin{cases}
  X_{tt} - \hbox{div}_y(A^\top A\nabla_y X_t ) - \p_{y_1}^2 X + (\nabla_y X)^{- \top}\nabla_y p=0,\\
\det\, (\nabla_y X)=1,\\
X(0,y)=X_0(y),\quad X_t(0,y)=u_0(X_0(y)).
\end{cases}
\end{equation}
We refer to Section \ref{secL} for the derivation of \eqref{A3}.

This paper aims at establishing the global well-posedness for the reformulated system \eqref{A3} by using elementary energy method. The large time decay estimates are also presented.
To continue, we first define some energy and temporal weighted energy functional spaces. In the sequel, all spatial derivatives are taken with respect to the Lagrangian spatial variable $y$ without specification.
Consider the solutions near equilibrium $X(t,y)=y+Y(t,y)$.
Then the equation \eqref{A3} reduces to
\begin{equation}\label{A4}
\begin{cases}
Y_{tt} - \hbox{div}_y(A^\top A\nabla_y Y_t ) - \p_{y_{_1}}^2 Y + (I+\nabla_y Y)^{- \top}\nabla_y p=0,\\
\det(I+\nabla_y Y)=1,\\
Y(0,y)=Y_0(y),\quad Y_t(0,y)=Y_1(y),
\end{cases}
\end{equation}
where $Y_0(y)=X_0(y)-y$ and $Y_1(y)=u_0(X_0(y))$.
The energy norm $E(t)$ and the dissipative energy $D(t)$ are defined as follows
\begin{align*}
	\begin{split}
		E(t)&=\|  Y_t(t)\|_{H^2}^2
		+\| \pa_1 Y(t)\|_{H^2}^2 +\|  \D Y(t)\|_{H^2}^2+(t+1)\|  \nabla Y_t (t)\|_{H^2}^2 \\[-4mm]\\
		&\quad
		+(t+1)\|  \nabla \pa_1Y(t)\|_{H^2}^2  +(t+1)^2 \|  \nabla\p_1Y_t(t)\|_{H^1}^2
		+(t+1)^2 \|  \nabla \pa_1^2 Y (t) \|_{H^1}^2,\\[-4mm]\\
		D(t)&=
		\|\na Y_t (t)\|_{H^2}^2
		+ \|\na \pa_1 Y(t) \|_{H^2}^2+(t+1) \|\nabla\p_1^2 Y(t) \|^2_{H^1}  \\[-4mm]\\
		&\quad+(t+1) \|\D Y_t(t)\|_{H^2}^2
		+ (t+1)^2 \| \D\p_1 Y_t (t)\|_{H^1}^2.
	\end{split}
\end{align*}
We denote
\begin{equation*}
	\mathcal{E}(t)=\sup_{0\leq \tau\leq t}E(\tau)+\int_0^t  D(\tau) \,\,\mathrm{d}\tau.
\end{equation*}

Now we write the main global well-posedness result  under the Lagrangian coordinates
by the flow $Y(t,y)$ .
\begin{thm}\label{th1}
	Let $Y_0\in H^4(\Real^3)$, $Y_1 \in H^3(\Real^3)$
with $\det\, (I+\nabla_y Y_0)=1$. There exists a constant $\epsilon_0>0$ such that, if
	\begin{equation}\label{asmp}
		\|Y_1\|_{H^3}^2
		+\|\pa_1Y_0\|_{H^3}^2
		+\|  \D Y_0\|_{H^2}^2\leq \epsilon_0,
	\end{equation}
	then there exists a global
	unique solution $Y$ solving \eqref{A4}
 on $[0,\infty)$. Moreover, the solution satisfies the following  estimate
	$$	\mathcal{E}(t)\leq M\epsilon_0,$$
	where $M$ is a positive constant.
\end{thm}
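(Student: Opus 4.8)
The plan is to prove Theorem~\ref{th1} by the classical continuation scheme: establish local existence in the energy class, derive a closed a priori bound of the form $\mathcal{E}(t)\le C_0\epsilon_0+C_0\,\mathcal{E}(t)^{3/2}$ on any interval on which the solution exists and stays small, and then run a bootstrap/continuity argument so that \eqref{asmp} forces $\mathcal{E}(t)\le M\epsilon_0$ for all $t$. Local well-posedness of \eqref{A4} in the class with $Y_t\in C([0,T];H^2)$ and $\nabla Y_t\in L^2(0,T;H^2)$ I would obtain by a standard linearization/fixed-point argument, using the parabolicity of the $Y_t$-equation together with the transport structure carried by $\partial_1 Y$; since $\mathcal{E}(0)\lesssim\epsilon_0$ (the $H^3$ data norms in \eqref{asmp} dominate the $H^2$ norms entering $E(0)$), the whole difficulty is concentrated in the a priori estimate.

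For the a priori estimate I would build the energy identity in layers matching the three temporal weights in $E(t)$ and $D(t)$. At the base level, testing the momentum equation in \eqref{A4} against $Y_t$ and its spatial derivatives up to order two yields $\tfrac12\tfrac{d}{dt}\bigl(\|Y_t\|_{H^2}^2+\|\partial_1 Y\|_{H^2}^2\bigr)+\|\nabla Y_t\|_{H^2}^2=\mathcal{N}$, where the pressure term $(I+\nabla Y)^{-\top}\nabla p$ contributes nothing to leading order because of the constraint $\det(I+\nabla Y)=1$, and $\mathcal{N}$ collects the cubic remainders. This controls $\|Y_t\|_{H^2}$, $\|\partial_1 Y\|_{H^2}$ and the viscous dissipation $\|\nabla Y_t\|_{H^2}$, but produces no decay of the magnetic variable $\partial_1 Y$.

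The essential mechanism is to recover the dissipation $\|\nabla\partial_1 Y\|_{H^2}^2$ for the non-resistive magnetic field through an interaction functional: computing $\tfrac{d}{dt}\langle\nabla Y_t,\nabla Y\rangle$ (and its higher-derivative analogues) and integrating by parts in $y_1$ through the restoring term $-\partial_1^2 Y$ produces exactly $-\|\nabla\partial_1 Y\|^2$. The a priori uncontrolled factor $\|\nabla Y\|$ is precisely where the anisotropy must be exploited: splitting $\nabla=(\partial_1,\nabla_{\rm h})$ and rewriting $\langle\nabla_{\rm h}Y_t,\nabla_{\rm h}Y\rangle=-\langle Y_t,\Delta_{\rm h}Y\rangle$ with $\Delta_{\rm h}Y=\Delta Y-\partial_1^2 Y$, the cross term is bounded by the controlled norms $\|Y_t\|$, $\|\Delta Y\|$ and $\|\partial_1 Y\|_{H^1}$, so the interaction functional is genuinely dominated by $E(t)$ and may be added to it with a small coefficient. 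The bounds on $\|\Delta Y\|_{H^2}$ then follow by applying $\Delta$, and div-free elliptic regularity (the constraint gives $\dive Y=O(|\nabla Y|^2)$) recovers $\nabla^k Y$ for $2\le k\le 4$ from $\Delta Y$.

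The second and third layers are obtained by repeating the above after multiplying by $(t+1)$ and $(t+1)^2$; the commutator $\tfrac{d}{dt}(t+1)^k=k(t+1)^{k-1}$ generates error terms of one lower weight that must be absorbed by the dissipation of the previous layer, and the derivative counts in $E(t)$ and $D(t)$ are chosen precisely so that this closes—e.g. the error from $(t+1)\|\nabla Y_t\|_{H^2}^2$ is exactly the base dissipation $\|\nabla Y_t\|_{H^2}^2\in D(t)$, while the error from the $(t+1)^2$-terms is controlled by $(t+1)\|\Delta Y_t\|_{H^2}^2\in D(t)$ after elliptic estimates. The main obstacle throughout is the degenerate, anisotropic dissipation: only $\partial_1 Y$ and $\nabla^k Y$ $(k\ge2)$—not the full first-order gradient—are available, so every nonlinear contribution must be estimated with these norms alone. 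In particular I would bound $\nabla p$ by taking the divergence of the momentum equation and solving the resulting elliptic problem, whose right-hand side is quadratic because $\dive Y_t$ is quadratic; together with the commutators from $A^\top A-I$ and $(I+\nabla Y)^{-\top}-I$, all of $\mathcal{N}$ is then $O(\mathcal{E}^{3/2})$ by product estimates and $H^2\hookrightarrow L^\infty$. Once $\mathcal{E}(t)\le C_0\epsilon_0+C_0\mathcal{E}(t)^{3/2}$ is established, taking $\epsilon_0$ small closes the bootstrap to give $\mathcal{E}(t)\le M\epsilon_0$, the continuation criterion upgrades the local solution to a global one, and uniqueness follows from an $L^2$-type energy estimate on the difference of two solutions.
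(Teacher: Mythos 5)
Your overall architecture matches the paper's proof of Theorem \ref{th1}: a layered, temporally weighted energy built from the multipliers $Y_t$, $-\frac14\Delta Y$, $-\frac14(t+1)\Delta Y_t$, $\frac{1}{16}(t+1)\Delta\partial_1^2Y$ and $\frac{1}{32}(t+1)^2\Delta\partial_1^2Y_t$ (Lemma \ref{Lem1}), a cross term recovering the magnetic dissipation $\|\nabla\partial_1 Y\|_{H^2}^2$, an implicit elliptic estimate for $\nabla p$ closed by smallness, and a continuity/bootstrap argument. (Incidentally, the anisotropic splitting $\nabla=(\partial_1,\nabla_{\rm h})$ you propose for controlling the interaction functional is unnecessary: since $\|\Delta Y\|_{H^2}^2$ is itself part of the energy, Cauchy--Schwarz and Young suffice, exactly as in \eqref{d10}.)

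However, there is a genuine gap at the decisive step, the nonlinear closure. You assert that all remainders are $O(\mathcal{E}^{3/2})$ ``by product estimates and $H^2\hookrightarrow L^\infty$.'' This fails for the terms in which the quadratic nonlinearity is paired against the part of the energy that neither decays nor is dissipated, namely $\|\Delta Y\|_{H^2}$. A typical contribution of $f_1=\nabla^2Y\,\nabla Y_t$ is of size $\|\Delta Y\|_{H^2}^2\,\|\nabla Y_t\|_{L^\infty}\le E(\tau)\,\|\nabla Y_t\|_{L^\infty}$, and the isotropic embedding only gives $\|\nabla Y_t\|_{L^\infty}\lesssim\|\nabla Y_t\|_{H^2}$, which is merely $L^2$ in time through $\int_0^t D(\tau)\,\mathrm{d}\tau$; then
\begin{equation*}
\int_0^t E(\tau)\,D(\tau)^{\frac12}\,\mathrm{d}\tau
\le \sup_{0\le\tau\le t}E(\tau)\Bigl(\int_0^t D(\tau)\,\mathrm{d}\tau\Bigr)^{\frac12}t^{\frac12},
\end{equation*}
which grows with $t$ and cannot be dominated by $\mathcal{E}(t)^{3/2}$, so the bootstrap does not close. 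This is precisely the obstruction the paper isolates (the $L^1$-in-time estimate of $\|\nabla_y Y_t\|_{L^\infty}$), and its resolution requires the anisotropic interpolation \eqref{es2},
$\|\nabla Y_t\|_{L^\infty}\lesssim \|\nabla^2 Y_t\|_{H^1}^{5/6}\|\partial_1\nabla^2 Y_t\|_{H^1}^{1/6}$,
whose two factors carry the different temporal weights $(t+1)^{1/2}$ and $(t+1)$ inside $D(t)$, so that it converts into $\|\nabla Y_t\|_{L^\infty}\lesssim (t+1)^{-7/12}D(t)^{1/2}$ with a square-integrable prefactor; this is what yields \eqref{esfH2}--\eqref{esp1f2H1} and makes the time integration in \eqref{f1f2} converge, with the stronger decay $(t+1)^{-1}$ for $\|\partial_1 f_i\|_{H^1}$ needed against the $(t+1)^2$-weighted layer. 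You invoke anisotropy only in the linear interaction functional, where it is not needed; without importing it into the nonlinear estimates in this quantitative, weight-compatible form, your claimed bound $\mathcal{E}(t)\le C_0\epsilon_0+C_0\mathcal{E}(t)^{3/2}$ is unproven, and with your stated tools the key term demonstrably does not close.
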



%


We use the elementary energy method, anisotropic techniques and some temporal weighted norms to close the global estimates.
The key point in the proof of the main theorem is the $L^1$ in time estimate of $\| \na_y Y_t \|_{L^\infty}$. The usual energy method only yields $L^2$ in time estimate.
We apply the temporal weighted energy method to provide better integrability.
The basic strategy is that in the higher order derivative estimate, some temporal factor can be applied.
Moreover, stronger temporal weight can be applied when we have both $\p_{y_1}$ and $\p_t$ derivative. Hence the bigger temporal power will be applied to $\p_{y_1} \p_t Y$ and $\p_{y_1}^2 Y$.



Let us present the notation we shall be using. For any $1\le p\le \infty$ and any measurable scalar or vector function $f$,
we will use $\|f\|_{L^p}$ to denote the usual $L^p$ norm. 
We use $\|\cdot\|_{L^{p}_{y'}L^{q}_{y_1}}$ to denote the $L^{q}_{y_1}$ norm with respect to $y_1$ and the $L^p_{y'}$ norm with respect to $y_2$ and $y_3$.
For nonnegative integer $s$, the $H^s$ inner product denotes $(f|g)_{H^s}=\sum_{|\alpha|\leq s}\int_{\bR^3} \p^\alpha f\cdot\p^\alpha g\, \mathrm{d}y$.
For any two quantities $X$ and $Y$, we denote $X \lesssim Y$ if
$X \le C Y$ for some constant $C>0$. Similarly $X \gtrsim Y$ if $X
\ge CY$ for some $C>0$.  The dependence of the constant $C$ on
other parameters or constants are usually clear from the context and
we usually suppress  this dependence.

The rest of this paper is organized as follows. In Section \ref{secL}, we set up the Lagrangian reformulation  of the  MHD system \eqref{A1}. In Section \ref{seclinear} and Section \ref{secnonlinear}, we present the energy estimates for the solutions. The proof of global well-posedness result  under the Lagrangian coordinates is given in Section \ref{secproof1}.  The final section is devoted to the proof of the main theorem.

\section{Lagrangian formulation of the MHD equations}\label{secL}

In this section, let us show the Lagrangian formulation of the MHD equations \eqref{A3}.

Firstly, for any function $f(t,X(t,y))$, by the chain rule, we have
\begin{equation*}
	\f{\p f(t,X(t,y))}{\p y_i}=\left(\f{\p f}{\p x_k}\right)(t,X(t,y))\frac{\partial X^k}{\partial y_i}.
\end{equation*}
Then using the definition of $A$ in \eqref{dfnA}, we see
\begin{equation*}
	\bigl(\nabla_x f\bigr)(t,X(t,y))=A(t,y)\nabla_yf(t,X(t,y)).
\end{equation*}
Meanwhile, it follows from the equation of $b$ that
\begin{align}\label{eqb}
	\frac{\mathrm{d}}{\mathrm{d}t}b^i(t,X(t,y))=b^k(t,X(t,y))A_{kl}\partial_{y_{_l}}u^i(t,X(t,y)).
\end{align}
Left multiplying $A^\top$ to \eqref{eqb}, we get
\begin{align}\label{eqb1}
	A_{ij}\frac{\mathrm{d}}{\mathrm{d}t}b^i(t,X(t,y))=b^k(t,X(t,y))A_{kl}A_{ij}\partial_{y_{_l}}u^i(t,X(t,y)).
\end{align}
Since $A^\top \nabla_y X=I$, i.e., $A_{ij}\partial_{y_{_l}}X^i=\delta_{jl}$, one has
\begin{align*}
	\frac{\mathrm{d}}{\mathrm{d}t} A_{ij}\partial_{y_{_l}}X^i+A_{ij}\frac{\mathrm{d}}{\mathrm{d}t} \partial_{y_{_l}}X^i=0,
\end{align*}
which yields
\begin{align}\label{eqA}
	\frac{\mathrm{d}}{\mathrm{d}t} A_{ij}\partial_{y_{_l}}X^i+A_{ij}\partial_{y_{_l}}u^i(t,X(t,y))=0.
\end{align}
Combining \eqref{eqb1} and \eqref{eqA}, and noting that $A_{kl}\partial_{y_{_l}}X^i=\delta_{ki}$, we obtain
\begin{align*}
	\frac{\mathrm{d}}{\mathrm{d}t}\left(A_{ij}b^i(t,X(t,y))\right)=0.
\end{align*}
This implies
\begin{align}\label{eqAb0}
	A^\top(t,y) b(t,X(t,y))=A^\top(0,y) b(0,X(0,y))=A^\top_0(y) b_0(X_0(y)).
\end{align}
Here $A^\top_0(y)=(\nabla_yX_0)^{-1}$.
 By \eqref{eq-com} and \eqref{eqAb0}, we then have
\begin{equation}\label{eqAbt}
	A^\top(t,y) b(t,X(t,y))=e_1.
\end{equation}
Hence,
\begin{equation}\label{eqbt}
	b(t,X(t,y))=\nabla_y Xe_1=\partial_{y_{_1}} X.
\end{equation}
As a consequence, by using \eqref{eqAbt} and \eqref{eqbt}, we get
\begin{align*}
	\bigl(b^j\p_jb^i\bigr)(t,X(t,y))=
	&b^j(t,X(t,y))A_{jl}(t,y)\f{\p b^i(t,X(t,y))}{\p y_{_l}}\\
	=&\partial_{y_{_1}}b^i(t,X(t,y))=\p_{y_{_1}}^2 X^i(t,y).
\end{align*}
This yields the first equation in \eqref{A3}. Next we show the derivation of the  second one in \eqref{A3}.
Let $J(t,y)=\det (\nabla_y X)$, then
it is easy to see from $\nabla \cdot u=0$ that
\begin{equation*}
	\partial_t J=JA_{ij}\partial_{y_{j}}u^{i}(t,X(t,y))=0.
\end{equation*}
Therefore, $\det (\nabla_y X)=\det (\nabla_y X_0)=1$.

Now let us focus on the equation \eqref{A3} in Lagrangian coordinates.
Consider the solutions near equilibrium $X(t,y)=y+Y(t,y)$. Equation \eqref{A3} reduces to
\begin{equation}\label{A8}
\begin{cases}
Y_{tt} - \hbox{div}_y(A^\top A\nabla_y Y_t ) - \p_{y_{_1}}^2 Y + (I+\nabla_y Y)^{- \top}\nabla_y p=0,\\
\det(I+\nabla_y Y)=1,\\
Y(0,y)=Y_0(y),\quad Y_t(0,y)=Y_1(y),
\end{cases}
\end{equation}
where $Y_0(y)=X_0(y)-y$ and $Y_1(y)=u_0(X_0(y))$.
We rewrite \eqref{A8} in another form
\begin{equation}\label{A10}
\begin{cases}
 Y_{tt}- \Delta_y Y_t - \p_{y_{_1}}^2 Y=f,\\
\det(I+\nabla_y Y)=1\\
Y(0,y)=Y_0(y),\quad Y_t(0,y)=Y_1(y),
\end{cases}
\end{equation}
with
\begin{align*}
f=\hbox{div}_y\big( (A^\top A-I)\nabla_y Y_t \big)-(I+\nabla_y Y)^{-\top}\nabla_y p.
\end{align*}

In what follows, we derive the expression for the pressure under Lagrangian coordinates. Clearly,
in Eulerian coordinates, we have
\begin{align*}
-\Delta_x p(t,x)
=\sum_{i,j=1}\nabla_{x_i}\nabla_{x_j}\bigl(u^i u^j-b^i b^j \bigr).
\end{align*}
Denote $\nabla_Y=A \nabla_y$.  Direct calculation implies that
\begin{align*}
		-\nabla_Y\cdot  \nabla_Y p(t, X(t,y))
		&=\sum_{i,j} \nabla_{Y^i} \nabla_{Y^j}\bigl(X^i_t X^j_t-\p_{y_{_1}} X^i \p_{y_{_1}}X^j\bigr)(t,y).
\end{align*}
Since $X(t,y)=y+Y(t,y)$, we then infer that
\begin{align*}
\sum_{i,j}\nabla_{Y^i}\nabla_{Y^j}\bigl(\p_{y_{_1}}X^i\p_{y_{_1}}X^j\bigr)
&=\sum_{i,j}\nabla_{Y^i}\nabla_{Y^j}\bigl((\delta_{1i}+\p_{y_{_1}}Y^i)
(\delta_{1j}+\p_1Y^j)\bigr)\\\nonumber
&=\sum_{i,j}\nabla_{Y^i}\nabla_{Y^j}\bigl(\p_{y_{_1}} Y^i\p_{y_{_1}} Y^j\bigr)
+2 \sum_{i,j}\nabla_{Y^i}\nabla_{Y^j}\bigl(\delta_{1i}\p_{y_{_1}} Y^j\bigr).
\end{align*}
In Eulerian coordinates, the magnetic field satisfies the divergence free condition $\nabla_x \cdot b(t,x)=0$.
Thus in Lagrangian coordinates, we have
\begin{align*}
0=\nabla_Y \cdot b(t,X(t,y))=\nabla_Y \cdot \p_{y_{_1}} X
= \nabla_Y \cdot \big( \p_{y_{_1}} Y+e_1\big) = \nabla_Y \cdot   \p_{y_{_1}} Y.
\end{align*}
Consequently,
\begin{align}\label{eqp1}
-\nabla_Y\cdot \nabla_Y p(t, X(t,y))
=\sum_{i,j} \nabla_{Y^i} \nabla_{Y^j}\bigl(Y^i_t Y^j_t-\p_{y_{_1}} Y^i \p_{y_{_1}}Y^j\bigr)(t,y).
\end{align}
For the left hand side of \eqref{eqp1}, we have
\begin{align}\label{eqp2}
-\nabla_Y\cdot \nabla_Y p(t, X(t,y))	&=-\textrm{div}_y( A^\top A \nabla_y p(t, X(t,y))).
\end{align}
This yields that
\begin{align}\label{eqp3}
	\begin{split}
			&\sum_{i,j} \nabla_{Y^i} \nabla_{Y^j}\bigl(Y^i_t Y^j_t-\p_{y_{_1}} Y^i \p_{y_{_1}}Y^j\bigr)(t,y)\\&\quad=\operatorname{div}_{y}\Bigl(A^\top\operatorname{div}_{y}\big(A^\top(Y^i_t Y^j_t-\p_{y_{_1}} Y^i \p_{y_{_1}}Y^j)\big)\Bigr)(t,y).
	\end{split}
\end{align}
Combining \eqref{eqp1}, \eqref{eqp2} and \eqref{eqp3}, we deduce
\begin{align*}
	\begin{split}
		p(t,X(t,y))&=-\Delta_y^{-1}\textrm{div}_y\big(( A^\top A-I) \nabla_y p(t, X(t,y))\big)\\&\quad+\Delta_y^{-1}\operatorname{div}_{y}\Bigl(A^\top\operatorname{div}_{y}\big(A^\top(\p_{y_{_1}} Y^i \p_{y_{_1}}Y^j-Y^i_t Y^j_t)\big)\Bigr)(t,y).
	\end{split}
\end{align*}
Thus we finish the derivation of the pressure $p$ under the Lagrangian coordinates.
\section{Estimate of the linear system}\label{seclinear}
In Section \ref{seclinear} and Section \ref{secnonlinear}, we present the energy estimate for \eqref{A10}.
Let $Y$ be a sufficiently smooth  solution of \eqref{A10} on
$[0,T).$
In this section,
we  cook up the energy estimates for the linear system
\begin{equation}\label{B19}
	\begin{cases}
		Y_{tt}-\Delta Y_t-\pa^2_1Y=f, \\
		Y|_{t=0}=Y_0,\quad Y_t|_{t=0}=Y_1.
	\end{cases}
\end{equation}
 The main result is stated in the following lemma.
\begin{lem}\label{Lem1}
	Let $Y$ be a smooth  solution of \eqref{B19} on $[0,T).$
	Then for all $t\in [0,T),$ there holds
	\begin{align}\label{mathcalE}
		\begin{split}
				\mathcal{E}(t)&\lesssim \left(\|  Y_1\|_{H^3}^2
			+\|\pa_1Y_0\|_{H^3}^2
			+\|  \D Y_0\|_{H^2}^2\right)\\&\quad
			+ \int_0^t \big|( f | Y_\tau -\frac14 \D Y -\f{1}{4}(\tau+1) \D Y_\tau)_{H^2}\big|\,\mathrm{d}\tau\\&\quad+\int_0^t\big|( f | \f{1}{16} (\tau+1)\D\p_1^2 Y+\f{1}{32} (\tau+1)^2 \D\p_1^2 Y_\tau )_{H^1} \big|\,\mathrm{d}\tau.
		\end{split}
	\end{align}
\end{lem}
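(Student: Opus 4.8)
The plan is to build a single Lyapunov functional $L(t)$, equivalent to $E(t)$, by adding up several weighted energy identities, each obtained by taking the $H^2$ or $H^1$ inner product of \eqref{B19} (or of its $\partial_1$-derivative) against a carefully chosen multiplier. The two test functions in the forcing integrals of \eqref{mathcalE} tell us exactly which multipliers are needed: at the $H^2$ level the multiplier is $Y_t-\frac14\Delta Y-\frac14(\tau+1)\Delta Y_t$, and at the $H^1$ level it is, after one integration by parts in $y_1$, the $\partial_1$-differentiated analogue encoded by $\frac1{16}(\tau+1)\Delta\partial_1^2 Y+\frac1{32}(\tau+1)^2\Delta\partial_1^2 Y_t$.

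First I would treat the $H^2$-level part. Testing \eqref{B19} against $Y_t$ in $H^2$ gives the basic identity $\frac12\frac{d}{dt}(\|Y_t\|_{H^2}^2+\|\partial_1 Y\|_{H^2}^2)+\|\nabla Y_t\|_{H^2}^2=(f|Y_t)_{H^2}$. Testing against $-\Delta Y$ produces, through the anisotropic integration-by-parts identity $(\partial_1^2 Y|\Delta Y)_{H^2}=\|\nabla\partial_1 Y\|_{H^2}^2$, the missing dissipation on $\partial_1 Y$ at the cost of a $+\|\nabla Y_t\|_{H^2}^2$ term; and testing against $-(\tau+1)\Delta Y_t$ generates the weighted energy $(\tau+1)(\|\nabla Y_t\|_{H^2}^2+\|\nabla\partial_1 Y\|_{H^2}^2)$ together with the dissipation $(\tau+1)\|\Delta Y_t\|_{H^2}^2$, at the cost of a $+\frac12(\|\nabla Y_t\|_{H^2}^2+\|\nabla\partial_1 Y\|_{H^2}^2)$ term. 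Taking the combination (multiplier $Y_t$) $+\frac14$(multiplier $-\Delta Y$) $+\frac14$(multiplier $-(\tau+1)\Delta Y_t$) makes the test function match the first forcing line, and the factor $\frac14$ is small enough that the two leftover terms are absorbed by the genuine dissipation $\|\nabla Y_t\|_{H^2}^2+\|\nabla\partial_1 Y\|_{H^2}^2$. Call the resulting inequality estimate~A.

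Next I would run the same two multiplier computations on $Z:=\partial_1 Y$, which solves $Z_{tt}-\Delta Z_t-\partial_1^2 Z=\partial_1 f$, but now in $H^1$ and with the heavier weights $(\tau+1)$ and $(\tau+1)^2$. Testing against $-\frac1{16}(\tau+1)\Delta Z$ yields the dissipation $(\tau+1)\|\nabla\partial_1^2 Y\|_{H^1}^2$, and testing against $-\frac1{32}(\tau+1)^2\Delta Z_t$ yields the top-order energies $(\tau+1)^2(\|\nabla\partial_1 Y_t\|_{H^1}^2+\|\nabla\partial_1^2 Y\|_{H^1}^2)$ and the dissipation $(\tau+1)^2\|\Delta\partial_1 Y_t\|_{H^1}^2$. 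Moving the $\partial_1$ off $f$ by one integration by parts turns $(\partial_1 f|\cdot)_{H^1}$ into the second forcing line of \eqref{mathcalE}. The choice $\frac1{32}=\frac12\cdot\frac1{16}$ is exactly what compensates the factor $2$ coming from $\frac{d}{dt}(\tau+1)^2$, so that the leftover $(\tau+1)\|\nabla\partial_1^2 Y\|_{H^1}^2$ of the last step is dominated by the dissipation produced in the previous one. Call this estimate~B.

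Finally I would add estimate~A and estimate~B, the latter weighted by its small factors $\frac1{16}$ and $\frac1{32}$, into one inequality $\frac{d}{dt}L+cD\lesssim(\text{the two forcing integrands})$, integrate on $[0,t]$, and take the supremum; since $L(0)\lesssim\|Y_1\|_{H^3}^2+\|\partial_1 Y_0\|_{H^3}^2+\|\Delta Y_0\|_{H^2}^2$ this yields \eqref{mathcalE}. Two points need care, and I expect the second to be the real obstacle. For coercivity $L\approx E$ one must check that $L$ controls $E$ even though we never control $\|\nabla Y\|$ or $\|Y\|$ themselves; the only dangerous cross term $(\nabla Y_t|\nabla Y)_{H^2}$ is rewritten as $-(Y_t|\Delta Y)_{H^2}$ and absorbed by Young's inequality into $\|Y_t\|_{H^2}^2$ and $\|\Delta Y\|_{H^2}^2$, both of which sit in $E$. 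The genuine difficulty is the absorption bookkeeping: every leftover created by differentiating a temporal weight or by the parabolic multiplier identities must be dominated by an available dissipation term. This is where the weight hierarchy $1\to(\tau+1)\to(\tau+1)^2$ and the anisotropic comparisons stemming from $|\xi_1|\le|\xi|$, such as $\|\nabla\partial_1 Y_t\|_{H^1}\lesssim\|\Delta Y_t\|_{H^2}$ and $\|\Delta\partial_1 Y\|_{H^1}\lesssim\|\nabla\partial_1 Y\|_{H^2}$, are essential; the borderline nature of the absorption is precisely why the explicit constants $\frac14,\frac1{16},\frac1{32}$ must be tracked rather than hidden inside $\lesssim$.
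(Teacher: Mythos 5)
Your proposal is correct and follows essentially the same route as the paper: the same four multipliers $Y_t$, $-\frac14\Delta Y$, $-\frac14(t+1)\Delta Y_t$ in $H^2$ and $\frac1{16}(t+1)\Delta\partial_1^2Y$, $\frac1{32}(t+1)^2\Delta\partial_1^2Y_t$ in $H^1$ (your phrasing via the equation for $Z=\partial_1 Y$ is the same computation after one integration by parts in $y_1$), the same identity converting $(Y_{tt}|\Delta Y)_{H^2}$ into a time derivative plus $\|\nabla Y_t\|_{H^2}^2$, the same weight-hierarchy absorption with the tracked constants, and the same Young-inequality coercivity check for the cross terms before integrating in time.
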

\begin{proof}
	\textbf{Step One.}
	Taking the $H^2$ inner product of \eqref{B19} with $ Y_t,$ we obtain the following identity
	\begin{equation}\label{S4eq10}
		\f12\f{\mathrm{d}}{\mathrm{d}t}\bigl(\|Y_t\|_{H^2}^2+\|\pa_{1}Y\|_{H^2}^2\bigr)
		+\|\na Y_t\|_{H^2}^2 =\bigl( f | Y_t\bigr)_{H^2}.
	\end{equation}
	Along the same line, taking the $H^2$ inner product of \eqref{B19} with $ -\frac14\D Y,$
	one has
	\begin{equation*}
		\f18\f{\mathrm{d}}{\mathrm{d}t}\|\D Y\|_{H^2}^2+\frac14\|\pa_1\na  Y\|_{H^2}^2
		-\frac14( Y_{tt} |\D Y)_{H^2}=-\frac14( f | \D Y)_{H^2}.
	\end{equation*}
	Notice that
	\[ (Y_{tt} | \D  Y)_{H^2}
	=\f{\mathrm{d}}{\mathrm{d}t}(  Y_{t}|
	\D  Y)_{H^2} +\|\na  Y_{t}\|_{H^2}^2.\]
	Hence \begin{equation}\label{d4}
		\begin{split}
			\f14 \f{\mathrm{d}}{\mathrm{d}t}\Bigl(\f12\|\D  Y\|_{_{H^2}}^2
			- (Y_{t} |\D  Y)_{H^2} \Bigr)
			-\f14 \|\na  Y_t\|_{H^2}^2
			+\f14 \|\pa_1\na Y\|_{H^2}^2 =-\f14 (  f |\D  Y)_{H^2}.
		\end{split}
	\end{equation}
	By summing up \eqref{S4eq10} with
	\eqref{d4}, we obtain
	\begin{equation}\label{d5}
		\begin{split}
			\f{\mathrm{d}}{\mathrm{d}t}  \Bigl( \f12\|  Y_t(t)\|_{H^2}^2
			&+\f12 \|  \pa_1Y(t)\|_{H^2}^2
			+\f18\|  \D Y(t)\|_{H^2}^2
			-\f14\bigl(  Y_t(t) |   \D Y(t)\bigr)_{H^2} \Bigr) \\
			& +\f34\|\na  Y_t\|_{H^2}^2
			+\f14\|\pa_1\na  Y\|_{H^2}^2
			=\bigl(  f |
			Y_t-\f14\D  Y\bigr)_{H^2}.
		\end{split}
	\end{equation}

	\textbf{Step Two.}
	Taking the $H^2$ inner product of \eqref{B19} with $-\f 14 (t+1) \D Y_t,$ the elementary calculation implies that
	\begin{equation}\label{d6}
		\begin{split}
			&\f18\f{\mathrm{d}}{\mathrm{d}t} \bigl((t+1)\|  \nabla Y_t (t)\|_{H^2}^2
			+(t+1)\|  \nabla \pa_1Y(t)\|_{H^2}^2  \bigr) \\
			&\quad- \f 18\bigl(\|  \nabla Y_t (t)\|_{H^2}^2
			+\|  \nabla \pa_1Y(t)\|_{H^2}^2  \bigr)
			+\f 14 (t+1) \|\D Y_t\|_{H^2}^2 =-\f 14 (t+1)  ( f | \D Y_t )_{H^2}.
		\end{split}
	\end{equation}
	
	
	\textbf{Step Three.}
	Similarly, taking the $H^1$ inner product of \eqref{B19} with $ \f{1}{16} (t+1) \D\p_1^2 Y,$ we get that
	\begin{equation}\label{d7}
		\begin{aligned}
			 \f{\mathrm{d}}{\mathrm{d}t} &\Big( \f{1}{32}(t+1)\|  \D\p_1 Y (t)\|_{H^1}^2
			-\f{1}{16} (t+1) (\D \p_1 Y| \p_1 Y_t)_{H^1}
			-\f{1}{32}  \| \nabla\p_1 Y (t)\|_{H^1}^2 \Big) \\
			&-\f{1}{32}  \|  \D\p_1 Y (t)\|_{H^1}^2
			-\f{1}{16} (t+1) \| \nabla\p_1 Y_t \|^2_{H^1}
			+\f{1}{16} (t+1) \|\nabla \p_1^2Y \|^2_{H^1} \\
			& =\f{1}{16}(t+1) ( f |  \D\p_1^2 Y )_{H^1}.
		\end{aligned}
	\end{equation}
	
	\textbf{Step Four.}
	At the last step, by taking the $H^1$ inner product of \eqref{B19} with $ \f{1}{32} (t+1)^2 \D\p_1^2 Y_t,$ we  have
	\begin{equation}\label{d8}
		\begin{aligned}
			&
			\f{1}{64} \f{\mathrm{d}}{\mathrm{d}t} \left((t+1)^2 \|  \nabla\p_1 Y_t (t)\|_{H^1}^2
			+ (t+1)^2 \|  \nabla \pa_1^2 Y \|_{H^1}^2  \right) \\
			&-\f{1}{32} (t+1)  \bigl(\|  \nabla\p_1Y_t (t)\|_{H^1}^2
			+ \|  \nabla \pa_1^2 Y \|_{H^1}^2  \bigr)
			+\f{1}{32} (t+1)^2 \| \D\p_1 Y_t\|_{H^1}^2 \\
			&=\f{1}{32} (t+1)^2  ( f |  \D\p_1^2 Y_t )_{H^1}.
		\end{aligned}
	\end{equation}
	
	By summing up  \eqref{d5}, \eqref{d6}, \eqref{d7}, \eqref{d8} and canceling the dissipative energy with negative sign, we deduce that
	\begin{equation}\label{d9}
		\begin{split}
			\f{\mathrm{d}}{\mathrm{d}t}  \tilde{E}(t) &+\f58\|\na  Y_t\|_{H^2}^2
			+\frac{3}{32}\|\na\pa_1  Y\|_{H^2}^2 +\f {1}{16} (t+1) \|\D Y_t\|_{H^2}^2
			\\&+\f{1}{32} (t+1) \|\nabla\p_1^2 Y \|^2_{H^1}
			+\f{1}{32} (t+1)^2 \| \D\p_1 Y_t\|_{H^1}^2 \\
			&\quad\quad\quad\leq \big|\bigl( f |  Y_t-\frac14 \D Y - \f{1}{4}(t+1)\D Y_t\bigr)_{H^2}\big|\\
&\quad\quad\quad\quad
			+\big|\bigl( f |  \f{1}{16} (t+1)\D\p_1^2 Y+\f{1}{32} (t+1)^2 \D\p_1^2 Y_t \bigr)_{H^1}\big|,
		\end{split}
	\end{equation}
	where
	\begin{align*}
		\tilde{E}(t)&:=\f12\|  Y_t(t)\|_{H^2}^2
		+\f12 \|  \pa_1Y(t)\|_{H^2}^2
		+\f18\|  \D Y(t)\|_{H^2}^2
		-\f14\bigl(  Y_t(t) |   \D Y(t)\bigr)_{H^2}  \\
		&\quad +\f18 (t+1)\|  \nabla Y_t (t)\|_{H^2}^2
		+ \f18 (t+1)\|  \nabla \pa_1Y(t)\|_{H^2}^2   \\
		&\quad + \f{1}{32} (t+1)\|  \D\p_1 Y (t)\|_{H^1}^2
		-\f{1}{16} (t+1)(\D\p_1 Y| \p_1 Y_t)_{H^1}
		-\f{1}{32}  \| \nabla\p_1 Y (t)\|_{H^1}^2  \\
		&\quad +  \f{1}{64} (t+1)^2 \|  \nabla\p_1 Y_t (t)\|_{H^1}^2
		+ \f{1}{64} (t+1)^2 \|  \nabla \pa_1^2 Y \|_{H^1}^2.
	\end{align*}
	By using the H\"older and Young inequalities, there hold
	\begin{align*}
		\f14\bigl(  Y_t |   \D Y(t)\bigr)_{H^2}\leq \frac{1}{16}\|\Delta Y\|_{H^2}^2+\frac{1}{4}\|Y_t\|_{H^2}^2
	\end{align*}
	and
	\begin{align*}
		\f{1}{16} (t+1)(\D\p_1 Y| \p_1 Y_t)_{H^1}\leq \frac{1}{64}(t+1)\|\Delta\p_1 Y\|_{H^1}^2+\frac{1}{16}(t+1)\|\p_1Y_t\|_{H^1}^2.
	\end{align*}
	Hence,
	we deduce the following lower bound for $\tilde{E}(t)$
	\begin{equation} \label{d10}
		\begin{split}
			\tilde{E}(t)&\geq
			\f14\|  Y_t(t)\|_{H^2}^2
			+\f12 \|  \pa_1Y(t)\|_{H^2}^2
			+\f{1}{32}\|  \D Y(t)\|_{H^2}^2 +\f{1}{16} (t+1)\|  \nabla Y_t (t)\|_{H^2}^2 \\
			&\quad\quad\quad\quad\quad\quad\
			+ \f{1}{16} (t+1)\|  \nabla \pa_1Y(t)\|_{H^2}^2+ \f{1}{64} (t+1)\|  \D\p_1 Y (t)\|_{H^1}^2   \\
			&\quad\quad\quad\quad\quad\quad\
			+   \f{1}{64} (t+1)^2 \|  \nabla\p_1 Y_t (t)\|_{H^1}^2
			+ \f{1}{64} (t+1)^2 \|  \nabla \pa_1^2 Y \|_{H^1}^2  .
		\end{split}
	\end{equation}
	At the same time, it is easy to get the upper bound of $\tilde{E}(0)$, that is
	\begin{equation} \label{d11}
		\tilde{E}(0)\lesssim
		\|  Y_1\|_{H^3}^2
		+\|  \pa_1Y_0\|_{H^3}^2
		+\|  \D Y_0\|_{H^2}^2.
	\end{equation}
For \eqref{d9}, performing the time integration over the interval $[0,t]$, together with \eqref{d10} and \eqref{d11}  will yield the lemma.
\end{proof}

\section{Estimates of the nonlinear terms}\label{secnonlinear}
In this section, we are going to handle the nonlinear terms. The goal is to control the second and third terms on the right hand side of \eqref{mathcalE} by $\mathcal{E}(t)$ under the ansatz that $\mathcal{E}(t)$ is sufficiently small.
The main result can be stated as follows.
\begin{lem}\label{Lem2}
	Let $Y$ be a smooth  solution of \eqref{B19} on $[0,T).$
	There exists a sufficiently small constant $\delta\in (0,1)$ such that if $\mathcal{E}(t)\leq \delta$,  then
	\begin{align*}
		\begin{split}
			&\int_0^t \big|( f | Y_\tau -\frac14 \D Y -\f{1}{4}(\tau+1) \D Y_\tau)_{H^2}\big|\,\mathrm{d}\tau\\&\quad+\int_0^t\big|( f | \f{1}{16} (\tau+1)\D\p_1^2 Y+\f{1}{32} (\tau+1)^2 \D\p_1^2 Y_\tau )_{H^1} \big|\,\mathrm{d}\tau\\&\lesssim \mathcal{E}(t)^\frac32+\mathcal{E}(t)^2
		\end{split}
	\end{align*}
holds for all $t\in [0,T)$.
\end{lem}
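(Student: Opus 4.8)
The plan is to split the force as $f=f_1+f_2$ with $f_1=\mathrm{div}_y\big((A^\top A-I)\nabla_y Y_t\big)$ and $f_2=-(I+\nabla_y Y)^{-\top}\nabla_y p$, and to observe first that both pieces are \emph{at least quadratic} in the unknowns. Writing $A=(I+\nabla_y Y)^{-\top}$ and using $\det(I+\nabla_y Y)=1$, a Neumann expansion gives $A^\top A-I=O(\nabla_y Y)$ and $(I+\nabla_y Y)^{-\top}-I=O(\nabla_y Y)$; inserting the pressure representation derived in Section~\ref{secL} (whose right-hand side is built from the quadratic quantities $Y^i_t Y^j_t-\partial_1 Y^i\partial_1 Y^j$ together with factors $A^\top A-I$) then shows that every term of $f$ is a product of at least two factors carrying $\nabla_y Y$, $\nabla_y Y_t$ or their derivatives. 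Consequently each inner product $(f\mid\cdot)$ against one of the test functions $Y_\tau$, $\Delta Y$, $(\tau+1)\Delta Y_\tau$ (in $H^2$) or $(\tau+1)\Delta\partial_1^2 Y$, $(\tau+1)^2\Delta\partial_1^2 Y_\tau$ (in $H^1$) produces, after integration by parts in $y$ to balance derivatives, integrands that are cubic plus higher order; the cubic integrals feed the $\mathcal{E}(t)^{3/2}$ term and the higher remainders the $\mathcal{E}(t)^2$ term.

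For each of the five pairings I would integrate by parts so that never more than one derivative beyond what the dissipation $D$ controls falls on $Y_t$ or on $\partial_1 Y$, then bound by Hölder, placing the lowest-order factor in $L^\infty_x$ (handled by anisotropic Gagliardo--Nirenberg/Agmon inequalities) and the remaining two factors in $L^2$-based norms. The essential gain is the $L^1$-in-time integrability of $\|\nabla_y Y_t\|_{L^\infty}$ and its top-order analogues: since $E$ controls $(\tau+1)\|\nabla Y_t\|_{H^2}^2$ and $(\tau+1)^2\|\nabla\partial_1 Y_t\|_{H^1}^2$ while $D$ controls $(\tau+1)\|\Delta Y_t\|_{H^2}^2$ and $(\tau+1)^2\|\Delta\partial_1 Y_t\|_{H^1}^2$, interpolation against the temporal weights yields $\tau$-decay fast enough to integrate. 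Each time integral is then closed by splitting the polynomial weight $(\tau+1)^k$ between the two $L^2$-type factors so that their fractional powers match exactly the weighted norms in $E$ and $D$, and by applying Cauchy--Schwarz in $\tau$: one factor is taken in $L^\infty_\tau$ and bounded by $\sqrt{\mathcal{E}(t)}$, the remaining $\tau$-integral of a squared dissipation quantity by $\mathcal{E}(t)$, giving $\sqrt{\mathcal{E}(t)}\cdot\mathcal{E}(t)=\mathcal{E}(t)^{3/2}$ for the genuinely cubic contributions.

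The hard part will be the anisotropy. The damping $-\Delta Y_t$ gives full-gradient velocity control $\|\nabla Y_t\|_{H^2}$ in $D$, but the restoring term $-\partial_1^2 Y$ yields dissipation only in the single direction $\partial_1$, namely $\|\nabla\partial_1 Y\|_{H^2}$ and $(\tau+1)\|\nabla\partial_1^2 Y\|_{H^1}$; the horizontal second derivatives of $Y$ are controlled only through $\|\Delta Y\|_{H^2}$ in $L^\infty_t$ (elliptic regularity, $\|\nabla^2 Y\|_{H^2}\lesssim\|\Delta Y\|_{H^2}$) and carry no time-integrable dissipation of their own. The delicate bookkeeping is thus to arrange the integration by parts and the Hölder splitting in every term so that each factor lacking $\partial_1$-structure is either the $L^\infty_t$ factor bounded by $\sqrt{\mathcal{E}(t)}$ or is paired with enough temporal weight to be absorbed into a weighted dissipation norm; equivalently, each surviving cubic integrand must contain at least one $\partial_1$-derivative or one velocity-gradient factor. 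Checking that this succeeds uniformly across the nonlocal pressure term $f_2$—whose $\Delta^{-1}\mathrm{div}$ structure is handled by boundedness of the Riesz-type operators on $H^s$ and by smallness of $A^\top A-I$ in $L^\infty$ under the ansatz $\mathcal{E}(t)\le\delta$—is the technical heart of the argument, with the higher-order terms of the Neumann expansions supplying the $\mathcal{E}(t)^2$ remainder.
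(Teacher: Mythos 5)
Your proposal is correct and follows essentially the same route as the paper: the same splitting of $f$ into quadratic, higher-order and pressure contributions, the same anisotropic interpolation that converts the temporal weights carried by $E$ and $D$ into integrable-in-time decay for $\|\nabla Y_t\|_{L^\infty}$ and its top-order analogues, the same integrations by parts against the weighted test functions, and the same Riesz-boundedness-plus-smallness absorption yielding $\|\nabla p\|_{H^2}\lesssim \|\nabla\partial_1 Y\|_{H^2}^2+\|\nabla Y_t\|_{H^2}^2$, with the cubic contributions closed by Cauchy--Schwarz in $\tau$ to give $\mathcal{E}(t)^{3/2}$ and the remainders $\mathcal{E}(t)^2$. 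The only cosmetic difference is that the paper expands $A$ exactly through the adjugate (a degree-two polynomial in $\nabla Y$, since $\det(I+\nabla Y)=1$), so its expansion of $A^\top A-I$ terminates at $(\nabla Y)^4$ instead of requiring a convergent Neumann series.
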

\begin{rem}
	The key point of the estimate of the nonlinear terms
 is to obtain the $L^1$ integrability in time.
	The trouble mainly comes from the terms
containing 	
$\nabla Y_t$.
This motivates us to cook up the temporal weighted energy.
\end{rem}

The proof of Lemma \ref{Lem2} will be divided into three parts: the quadratic nonlinearities, the higher order nonlinearities and the pressure term.
They will be treated in the following three subsections.

Before going any further, we first study the structure of nonlinear terms.
Recall that
\begin{align*}
	f=\hbox{div}\big( (A^\top A-I)\nabla Y_t \big)-A\nabla p,
\end{align*}
with
\begin{equation}\label{exp-A}
	A=\big(I+\na Y\big)^{-\top},
\end{equation}
and
\begin{align*}
	\nabla p&=-\Delta^{-1}\nabla \textrm{div}\big(( A^\top A-I) \nabla p\big)\\&\quad+\Delta^{-1}\nabla \operatorname{div}\Bigl(A^\top\operatorname{div}\big(A^\top(\p_1 Y\otimes \p_1Y-Y_t\otimes  Y_t)\big)\Bigr).
\end{align*}
Due to the definition of $A$ in \eqref{exp-A},
it is easy to see
\begin{align*}
	A=A^*/\det(I+\nabla Y),
\end{align*}
where $A^*=(A^*_{ij})_{3\times 3}$ and $A^*_{ij}$ is the algebraic complement minor of the $(i,j)$th entry in the matrix $(I+\nabla Y)$.
Since $\det(I+\nabla Y)=1$, we obtain the following expression of $A$
\begin{align}\label{expA}
	A=I+B_1+B_2,
\end{align}
with
\begin{align*}
	B_1=I\dv Y-(\na Y)^\top
\end{align*}
and
\begin{align*}
	B_2&=
	\left(
	\begin{array}{ccc}
		\p_2Y^2\p_3Y^3-\p_2Y^3\p_3Y^2 & \p_1Y^3\p_3Y^2-\p_1Y^2\p_3Y^3 & \p_1Y^2\p_2Y^3-\p_1Y^3\p_2Y^2\\ \p_2Y^3\p_3Y^1-\p_2Y^1\p_3Y^3
		&	\p_1Y^1\p_3Y^3 -\p_1Y^3\p_3Y^1   &\p_1Y^3\p_2Y^1-\p_2Y^3\p_1Y^1\\
		\p_2Y^1\p_3Y^2-\p_2Y^2\p_3Y^1 &\p_1Y^2\p_3Y^1-\p_1Y^1\p_3Y^2 & \p_1Y^1\p_2Y^2-\p_1Y^2\p_2Y^1
	\end{array}
	\right).
\end{align*}
According to the fact that
\begin{align*}
	A^\top A-I&=(A^\top-I)+(A-I)+(A^\top-I)(A-I)\\
	&=(B_1^\top+B_2^\top)+(B_1+B_2)+(B_1^\top+B_2^\top)(B_1+B_2),
\end{align*}
we can roughly treat $A^\top A-I$ as
\begin{equation}\label{exp-A2}
O\Big(\nabla Y+(\nabla Y)^2+(\nabla Y)^3+(\nabla Y)^4\Big).
\end{equation}
Here, terms containing $\nabla Y$ and $(\nabla Y)^2$ come from $B_1$ or $B_1^\top$ and $B_2$ or $B_2^\top$, respectively. Terms containing $(\nabla Y)^3$ and $(\nabla Y)^4$ come from $(B_1^\top+B_2^\top)(B_1+B_2)$.
For simplicity, we roughly take $f$ as
\begin{align}\label{exp-f}
	f\sim \nabla(\nabla Y\nabla Y_t) +\nabla\Big(\big((\nabla Y)^2+(\nabla Y)^3+(\nabla Y)^4\big)\nabla Y_t\Big)
	+A\nabla p.
\end{align}
In the following subsections, we are going to  estimate the nonlinear terms $f$ in \eqref{exp-f} one by one.
\subsection{Estimates of quadratic nonlinear terms}\label{quad}
In this subsection, we  deal with the first term in \eqref{exp-f}
\begin{align}\label{f12-d}
\nabla(\nabla Y\nabla Y_t)
=\nabla^2 Y\nabla Y_t+\nabla Y\nabla^2 Y_t:=f_1+f_2.
\end{align}
By integration by parts, we organize
\begin{align}\label{f12}
&(f_1+f_2| Y_t-\frac{1}{4}\Delta Y-\f{1}{4}(t+1) \D Y_t)_{H^2}
+(f_1+f_2|\frac{1}{16}(t+1)\partial_1^2\Delta Y+\frac{1}{32}(t+1)^2\partial_1^2\Delta Y_t)_{H^1},\nonumber\\\nonumber
&=(f_1| Y_t-\frac{1}{4}\Delta Y-\f{1}{4}(t+1) \D Y_t)_{H^2}
+(f_1|\frac{1}{16}(t+1)\partial_1^2\Delta Y)_{H^1}
+(f_1|\frac{1}{32}(t+1)^2\partial_1^2\Delta Y_t)_{H^1}\\\nonumber
&\quad+(f_2| Y_t-\frac{1}{4}\Delta Y-\f{1}{4}(t+1) \D Y_t)_{H^2}
+(f_2|\frac{1}{16}(t+1)\partial_1^2\Delta Y)_{H^1}
+(f_2|\frac{1}{32}(t+1)^2\partial_1^2\Delta Y_t)_{H^1}\\
&=I_1+I_2+I_3+J_1+J_2+J_3.
\end{align}
Before presenting the estimates of these terms, we prepare the bounds for
$\|f_1\|_{H^2}$, $\|\p_1 f_1\|_{H^1}$, $\|f_2\|_{H^2}$ and $\|\p_1 f_2\|_{H^1}$
by $E(t)$ and $D(t)$.
\begin{lem}
There holds
\begin{align}
\label{esfH2}
\|f_1\|_{H^2}&\lesssim (t+1)^{-\frac{7}{12}}E(t)^\frac12D(t)^\frac12,\\
\label{esp1f1H1}
\|\p_1f_1\|_{H^1}&\lesssim (t+1)^{-1}E(t)^\frac12D(t)^\frac12,\\
\label{esf2H2}
\|f_2\|_{H^2}&\lesssim(t+1)^{-\frac{7}{12}}E(t)^\frac12D(t)^\frac12,\\
\label{esp1f2H1}
\|\partial_1 f_2\|_{H^1}&\lesssim (t+1)^{-1}E(t)^\frac12D(t)^\frac12.
\end{align}
\end{lem}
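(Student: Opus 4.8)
The plan is to expand $f_1=\na^2 Y\,\na Y_t$ and $f_2=\na Y\,\na^2 Y_t$ by the Leibniz rule and to estimate each resulting summand $\na^{2+j}Y\cdot\na^{1+k}Y_t$ (with $j+k\le 2$ for the $H^2$ norms, and one further $\p_1$ for the $H^1$ norms) by placing the factor of highest order in $L^2$ and the remaining factor in $L^\infty$ or in a mixed norm, reading off the temporal weight from a short dictionary furnished by $E(t)$ and $D(t)$. The crucial features of this dictionary are that the top-order isotropic norm $\|\D Y\|_{H^2}$ carries \emph{no} temporal weight, so that $\|\na^4 Y\|_{L^2}\lesssim E^{1/2}$ with no gain, whereas every $\p_t$ or $\p_1$ derivative buys a factor $(t+1)^{-1/2}$: concretely $\|\na Y_t\|_{H^2}\lesssim D^{1/2}$, $\|\na^3 Y_t\|_{L^2}\lesssim\|\D Y_t\|_{H^2}\lesssim (t+1)^{-1/2}D^{1/2}$, $\|\na\p_1 Y\|_{H^2}\lesssim(t+1)^{-1/2}E^{1/2}$, and, decisively, $\|\p_1\na^2 Y_t\|_{L^2}\lesssim\|\D\p_1 Y_t\|_{H^1}\lesssim(t+1)^{-1}D^{1/2}$. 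Tracking these weights through the product estimates is exactly what manufactures the decay rates in the lemma.

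The analytic engine consists of three interpolation inequalities on $\R^3=\R_{y_1}\times\R^2_{y'}$: the homogeneous Gagliardo--Nirenberg bound $\|h\|_{L^\infty}\lesssim\|\na h\|_{L^2}^{1/2}\|\na^2 h\|_{L^2}^{1/2}$; the mixed-norm Hölder inequality $\|fg\|_{L^2}\le\|f\|_{L^\infty_{y'}L^2_{y_1}}\,\|g\|_{L^2_{y'}L^\infty_{y_1}}$, which lets me peel a one-dimensional Agmon inequality $\|\cdot\|_{L^\infty_{y_1}}\lesssim\|\cdot\|_{L^2_{y_1}}^{1/2}\|\p_1\cdot\|_{L^2_{y_1}}^{1/2}$ off whichever factor has a spare $\p_1$; and, most importantly, the anisotropic Gagliardo--Nirenberg inequality
\[
\|\na g\|_{L^\infty}\lesssim\|\na^2 g\|_{L^2}^{1/2}\,\|\p_1\na^2 g\|_{L^2}^{1/6}\,\|\na^3 g\|_{L^2}^{1/3},
\]
whose exponents $\tfrac12,\tfrac16,\tfrac13$ sum to one and are fixed by scaling. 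It is this last inequality that breaks the natural barrier $(t+1)^{-1/2}$ and will be the hard part of the argument.

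The main obstacle is the extreme summand of $\|f_1\|_{H^2}$ in which both derivatives fall on the top-order factor, namely $\na^4 Y\cdot\na Y_t$. Since $\na^4 Y$ sits at the ceiling of the energy it can be neither differentiated further nor given any temporal weight, so $\|\na^4 Y\|_{L^2}\lesssim E^{1/2}$ must be used as is and \emph{all} the decay has to be extracted from $\|\na Y_t\|_{L^\infty}$. Applying the anisotropic inequality with $g=Y_t$ and inserting the weights $\tfrac12,1,\tfrac12$ of $\|\na^2 Y_t\|_{L^2},\|\p_1\na^2 Y_t\|_{L^2},\|\na^3 Y_t\|_{L^2}$ gives the exponent $\tfrac12\cdot\tfrac12+1\cdot\tfrac16+\tfrac12\cdot\tfrac13=\tfrac7{12}$, hence $\|\na Y_t\|_{L^\infty}\lesssim(t+1)^{-7/12}D^{1/2}$ and $\|\na^4 Y\cdot\na Y_t\|_{L^2}\lesssim(t+1)^{-7/12}E^{1/2}D^{1/2}$. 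The bound for $f_2$ is the mirror image: there the worst summand is $\na Y\cdot\na^4 Y_t$, where one keeps $\|\na^4 Y_t\|_{L^2}\lesssim(t+1)^{-1/2}D^{1/2}$ and applies the anisotropic inequality to $g=Y$, whose $\|\p_1\na^2 Y\|_{L^2}$ now carries weight $\tfrac12$, contributing the missing $\tfrac1{12}$ and again totalling $\tfrac7{12}$. The remaining, more evenly balanced summands (e.g. $\na^3 Y\cdot\na^2 Y_t$) are handled by the mixed-norm Hölder inequality and decay at least this fast.

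For $\|\p_1 f_1\|_{H^1}$ and $\|\p_1 f_2\|_{H^1}$ the extra $\p_1$ may be placed directly on the highest-order factor, which then \emph{does} carry a weight — $\|\p_1\na^3 Y\|_{L^2}\lesssim(t+1)^{-1/2}E^{1/2}$ via $\|\na\p_1 Y\|_{H^2}$, or $\|\p_1\na^3 Y_t\|_{L^2}\lesssim(t+1)^{-1}D^{1/2}$ via $\|\D\p_1 Y_t\|_{H^1}$ — so here the plain homogeneous Gagliardo--Nirenberg bound for the companion $L^\infty$-factor already suffices and the product decays like $(t+1)^{-1}E^{1/2}D^{1/2}$, matching the stated rate. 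I expect the only genuinely delicate point to be the verification and the correct application of the anisotropic inequality in the two extreme summands above; once the weighted dictionary and the three interpolation inequalities are recorded, every remaining summand reduces to routine bookkeeping.
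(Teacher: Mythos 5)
Your proposal is correct and follows essentially the paper's own route: Leibniz expansion of $f_1=\nabla^2Y\nabla Y_t$ and $f_2=\nabla Y\nabla^2Y_t$, top-order factor in $L^2$ against an $L^\infty$ or mixed-norm companion, anisotropic interpolation harvesting a $\tfrac16$-power of a $\partial_1$-derivative to reach $(t+1)^{-7/12}$ (your bookkeeping $\tfrac12\cdot\tfrac56+1\cdot\tfrac16=\tfrac7{12}$ for $\|\nabla Y_t\|_{L^\infty}$, the extra $\tfrac1{12}$ from $\partial_1\nabla^2Y$ in the $f_2$ case, and plain Agmon sufficing at rate $(t+1)^{-1}$ for the $\partial_1 f_i$ bounds all match \eqref{es2}, \eqref{es9} and the surrounding estimates). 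One caveat concerns the exact form of your key inequality: the paper's \eqref{es2} reads $\|\nabla Y_t\|_{L^\infty}\lesssim\|\nabla^2Y_t\|_{H^1}^{5/6}\|\partial_1\nabla^2Y_t\|_{H^1}^{1/6}$ with inhomogeneous $H^1$ factors, and that is what your second engine tool (mixed-norm H\"older plus one-dimensional interpolation in $y_1$, followed by Sobolev in $y'$) actually produces, since the $y'$-step unavoidably brings in $\partial_1\nabla^3 g$; your stated pure-$L^2$ form $\|\nabla g\|_{L^\infty}\lesssim\|\nabla^2g\|_{L^2}^{1/2}\|\partial_1\nabla^2g\|_{L^2}^{1/6}\|\nabla^3g\|_{L^2}^{1/3}$ therefore does not follow from the tools you list. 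It is nonetheless true: split frequency space into $\{|\xi|\le\Lambda\}$, $\{|\xi|\ge\Lambda,\,|\xi_1|\le\mu\}$ and $\{|\xi|\ge\Lambda,\,|\xi_1|\ge\mu\}$, estimate the last region against the weight $|\xi_1|^{2/3}|\xi|^{4/3}$ (the $\xi'$-integral of $(\xi_1^2+|\xi'|^2)^{-4/3}$ converges, which is exactly why the $\partial_1$-exponent must stay below $\tfrac14$), and optimize in $\Lambda,\mu$; and in any case the weaker $H^1$-version suffices for all your rates, because $\partial_1\nabla^3Y_t$ is controlled by $\|\Delta\partial_1Y_t\|_{H^1}$ with the same $(t+1)^{-1}$ weight. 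Finally, "fixed by scaling" overstates the case: isotropic scaling pins only the exponent $\tfrac12$ on $\|\nabla^2g\|_{L^2}$ and the sum $\tfrac12$ of the other two, so the split $(\tfrac16,\tfrac13)$ is a choice that needs the justification above, not a scaling consequence.
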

\begin{proof}
\textbf{Estimate of $\|f_1\|_{H^2}$}.

Using the interpolation inequality and the anisotropic Sobolev inequality, one has
\begin{align}\label{es2}
	\begin{split}
		\|\nabla Y_t\|_{L^\infty}&\lesssim\|\nabla^2 Y_t\|^{\frac{1}{2}}_{L^2}\|\nabla^2 Y_t\|^{\frac{1}{2}}_{L^6}\\
		&\lesssim\|\nabla^2 Y_t\|^{\frac{1}{2}}_{L^2}\big\|\|\nabla^2 Y_t\|^{\frac{2}{3}}_{L^2_{y_1}}\|\partial_1\nabla^2 Y_t\|^{\frac{1}{3}}_{L^2_{y_1}}\big\|_{L^6_{y'}}^{\frac{1}{2}}\\
		&\lesssim\|\nabla^2 Y_t\|_{H^1}^{\frac{5}{6}}\|\partial_1\nabla^2 Y_t\|_{H^1}^{\frac{1}{6}}.
	\end{split}
\end{align}
Then, by the H\"older inequality and \eqref{es2}, we obtain the estimate of $\|f_1\|_{L^2}$ by
\begin{align}\label{esf1}
	\|f_1\|_{L^2}
	\lesssim \|\nabla^2 Y\|_{L^2}\|\nabla Y_t\|_{L^\infty}\lesssim \|\nabla^2 Y\|_{L^2}\|\nabla^2 Y_t\|_{H^1}^{\frac{5}{6}}\|\partial_1\nabla^2 Y_t\|_{H^1}^{\frac{1}{6}}.
\end{align}
On the other hand, for $\|\nabla^2 f_1\|_{L^2}$, we have
\begin{align}\label{es3}
	\|\nabla^2 f_1\|_{L^2}
	\lesssim \|\nabla^4Y\nabla Y_t\|_{L^2}+\|\nabla^3Y\nabla^2 Y_t\|_{L^2}+\|\nabla^2Y\nabla^3 Y_t\|_{L^2}.
\end{align}
By the H\"older inequality and \eqref{es2}, we get
\begin{align}\label{es4}
	\begin{split}
		\|\nabla^4Y\nabla Y_t\|_{L^2}\leq \|\nabla^4Y\|_{L^2}\|\nabla Y_t\|_{L^\infty}\lesssim \|\nabla^4Y\|_{L^2}\|\nabla^2 Y_t\|_{H^1}^{\frac{5}{6}}\|\partial_1\nabla^2 Y_t\|_{H^1}^{\frac{1}{6}}.
	\end{split}
\end{align}
Applying the anisotropic H\"older and the Sobolev inequalities, we derive that
\begin{align}\label{es5}
	\begin{split}
		&\|\nabla^3Y\nabla^2 Y_t\|_{L^2}+\|\nabla^2Y\nabla^3 Y_t\|_{L^2}\\&\lesssim  \|\nabla^3Y\|_{L^2_{y'}L^\infty_{y_1}}\|\nabla^2Y_t\|_{L^\infty_{y'} L^2_{y_1}}+\|\nabla^2Y\|_{L^\infty_{y'}L^2_{y_1}}\|\nabla^3Y_t\|_{L^2_{y'}L^\infty_{y_1}}\\&\lesssim \|\nabla^3Y\|_{L^2}^{\frac12}\|\partial_1\nabla^3Y\|_{L^2}^\frac12\|\nabla^2Y_t\|_{H^2}+\|\nabla^2Y\|_{H^2}\|\nabla^3Y_t\|_{L^2}^\frac12\|\partial_1\nabla^3Y_t\|_{L^2}^\frac12.
	\end{split}
\end{align}
Plugging \eqref{es4} and \eqref{es5} into \eqref{es3}, one has
\begin{align}\label{esna2f1}
	\begin{split}
		\|\nabla^2 f_1\|_{L^2}
		&\lesssim \|\Delta Y\|_{H^2}\|\Delta Y_t\|_{H^2}^{\frac{5}{6}}\|\partial_1\Delta Y_t\|_{H^1}^{\frac{1}{6}}+ \|\Delta Y\|_{H^2}^{\frac12}\|\partial_1\nabla Y\|_{H^2}^\frac12\|\Delta Y_t\|_{H^2}\\&\quad+\|\Delta Y\|_{H^2}\|\Delta Y_t\|_{H^2}^\frac12\|\partial_1\Delta Y_t\|_{H^1}^\frac12.
	\end{split}
\end{align}
Combining \eqref{esf1}  with \eqref{esna2f1}, we obtain
\begin{align*}
	\begin{split}
		\|f_1\|_{H^2}
		&\lesssim (t+1)^{-\frac{7}{12}}\|\Delta Y\|_{H^2}\big(\sqrt{t+1}\|\Delta Y_t\|_{H^2}\big)^{\frac{5}{6}}\big({(t+1)}\|\partial_1\Delta Y_t\|_{H^1}\big)^{\frac{1}{6}}\\&\quad+(t+1)^{-\frac{3}{4}}\|\Delta
		Y\|_{H^2}^{\frac{1}{2}}\big(\sqrt{t+1}\|\p_1\nabla  Y\|_{H^2}\big)^\frac12\big(\sqrt{(t+1)}\|\Delta Y_t\|_{H^2}\big)\\&\quad+(t+1)^{-\frac34}\|\Delta Y\|_{H^2}\big(\sqrt{t+1}\|\Delta Y_t\|_{H^2}\big)^{\frac{1}{2}}\big({(t+1)}\|\partial_1\Delta Y_t\|_{H^1}\big)^{\frac{1}{2}}\\&\lesssim (t+1)^{-\frac{7}{12}}E(t)^\frac12D(t)^\frac12.
	\end{split}
\end{align*}

\textbf{Estimate of $\|\p_1f_1\|_{H^1}$}.

Now we estimate $\|\p_1f_1\|_{H^1}$. Let us first consider $\|\partial_1f_1\|_{L^2}$. Note that
\begin{align*}
	\partial_1f_1=\partial_1\nabla^2 Y\nabla Y_t+\nabla^2 Y\partial_1\nabla Y_t.
\end{align*}
By the H\"older and Sobolev inequalities, we have
\begin{align}\label{es6}
	\|\partial_1\nabla Y_t\|_{L^\infty}\lesssim \|\partial_1\nabla^2 Y_t\|_{L^2}^\frac12\|\partial_1\nabla^2 Y_t\|_{L^6}^\frac12\lesssim \|\partial_1\Delta Y_t\|_{H^1}.
\end{align}
Thus, according to \eqref{es2} and \eqref{es6}, we derive
\begin{align}
	\begin{split}\label{esp1f1}
		\|\partial_1 f_1\|_{L^2}&\lesssim\|\partial_1\nabla^2 Y\|_{L^2}\|\nabla Y_t\|_{L^\infty}+\|\nabla^2 Y\|_{L^2}\|\partial_1\nabla Y_t\|_{L^\infty}\\
		&\lesssim \|\partial_1\nabla^2  Y\|_{L^2}\|\Delta Y_t\|_{H^1}^{\frac{5}{6}}\|\partial_1\Delta Y_t\|_{H^1}^{\frac{1}{6}}+\|\nabla^2 Y\|_{L^2}\|\partial_1\Delta Y_t\|_{H^1}.
	\end{split}
\end{align}

On the other hand, we compute
\begin{equation*}
	\partial_1\nabla f_1= \partial_1\nabla^3Y\nabla Y_t+\nabla^3Y\partial_1\nabla Y_t+\partial_1\nabla^2 Y\nabla^2Y_t+\nabla^2Y\partial_1\nabla^2Y_t.
\end{equation*}
Using the anisotropic H\"older and Sobolev inequalities, we get
\begin{align}\label{es7}
	\begin{split}
		&\|\partial_1\nabla^2 Y\nabla^2Y_t\|_{L^2}
		+\|\nabla^2Y\partial_1\nabla^2Y_t\|_{L^2}\\
&\lesssim
\|\partial_1\nabla^2 Y\|_{L^2_{y'}L^\infty_{y_1}}\|\nabla^2Y_t\|_{L^\infty_{y'} L^2_{y_1}}
+\|\nabla^2Y\|_{L^4}\|\partial_1\nabla^2Y_t\|_{L^4}\\
&\lesssim \|\partial_1\nabla^2 Y\|_{L^2}^{\frac12}\|\partial_1^2\nabla^2Y\|_{L^2}^\frac12\|\nabla^2Y_t\|_{H^2}+\|\nabla^2Y\|_{H^2}\|\partial_1\Delta Y_t\|_{H^1}.
	\end{split}
\end{align}
It then follows from \eqref{es2}, \eqref{es6} and \eqref{es7} that
\begin{align}
	\begin{split}\label{esp1naf1}
		\|\partial_1 \nabla f_1\|_{L^2}&\lesssim  \|\partial_1\nabla^3 Y\|_{L^2}\|\nabla Y_t\|_{L^\infty}+\|\nabla^3 Y\|_{L^2}\|\partial_1\nabla Y_t\|_{L^\infty}\\
		&\quad+\|\partial_1\nabla^2 Y\nabla^2Y_t\|_{L^2}
		+\|\nabla^2Y\partial_1\nabla^2Y_t\|_{L^2}\\
		&\lesssim \|\partial_1\nabla^3  Y\|_{L^2}\|\Delta Y_t\|_{H^1}^{\frac{5}{6}}\|\partial_1\Delta Y_t\|_{H^1}^{\frac{1}{6}}+\|\nabla^3 Y\|_{L^2} \|\partial_1\Delta Y_t\|_{H^1}\\
		&\quad+\|\partial_1\nabla^2 Y\|_{L^2}^{\frac12}\|\partial_1^2\nabla^2Y\|_{L^2}^\frac12\|\nabla^2Y_t\|_{H^2}+\|\nabla^2Y\|_{H^2}\|\partial_1\Delta Y_t\|_{H^1}.
	\end{split}
\end{align}
Combining \eqref{esp1f1} with \eqref{esp1naf1}, we infer
\begin{align*}
	\begin{split}
		\|\p_1f_1\|_{H^1}&\lesssim (t+1)^{-\frac{13}{12}}\big(\sqrt{t+1}\|\p_1\nabla  Y\|_{H^2}\big)\big(\sqrt{t+1}\|\Delta Y_t\|_{H^2}\big)^{\frac{5}{6}}\big({(t+1)}\|\partial_1\Delta Y_t\|_{H^1}\big)^{\frac{1}{6}}\\
&\quad+(t+1)^{-\frac{5}{4}}\big(\sqrt{t+1}\|\p_1\nabla  Y\|_{H^2}\big)^\frac12\big((t+1)\|\p_1^2\nabla  Y\|_{H^1}\big)^\frac12\big(\sqrt{t+1}\|\Delta Y_t\|_{H^2}\big)\\&\quad+(t+1)^{-1}\|\D Y\|_{H^2}\big({(t+1)}\|\partial_1\Delta Y_t\|_{H^1}\big)\\&\lesssim (t+1)^{-1}E(t)^\frac12D(t)^\frac12.
	\end{split}
\end{align*}
\textbf{Estimate of $\|f_2\|_{H^2}$}.

To estimate $\|f_2\|_{H^2}$, we use the  anisotropic H\"older  and Sobolev inequalities to obtain
\begin{align}
	\begin{split}\label{esf2}
		\|f_2\|_{L^2}&\lesssim \|\nabla Y\|_{L^6}\|\nabla^2Y_t\|_{L^3} \\
&\lesssim \|\nabla^2 Y\|_{L^2}\big\|\|\nabla^2Y_t\|_{L^2_{y_1}}^\frac56\|\partial_1\nabla^2Y_t\|_{L^2_{y_1}}^\frac16\big\|_{L^3_{y'}}\\
&\lesssim\|\nabla^2 Y\|_{L^2}\|\nabla^2Y_t\|_{H^1}^\frac56 \|\partial_1\nabla^2Y_t\|_{L^2}^\frac16.
	\end{split}
\end{align}

Next, we estimate $\|\nabla^2f_2\|_{L^2}$. Note that
\begin{equation*}
	\nabla^2 f_2= \nabla^3 Y\nabla^2Y_t+2\nabla^2 Y\nabla^3Y_t+\nabla Y\nabla^4Y_t.
\end{equation*}
By  the H\"older inequality, anisotropic interpolation inequality and Sobolev inequality, we obtain
\begin{align}\label{es9}
	\begin{split}
		\|\nabla Y\|_{L^\infty}
		&\lesssim \|\nabla^2 Y\|_{L^2}^{\frac{1}{2}}\|\nabla^2 Y\|_{L^6}^{\frac{1}{2}}\\
		&\lesssim\|\nabla^2 Y\|_{L^2}^{\frac{1}{2}}\big\|\|\nabla^2 Y\|_{L^2_{y_1}}^\frac23\|\partial_1\nabla^2 Y\|_{L^2_{y_1}}^\frac13\big\|_{L^6_{y'}}^\frac12\\&\lesssim\|\Delta Y\|_{H^2}^{\frac{5}{6}}\|\partial_1\nabla Y\|_{H^2}^{\frac{1}{6}}.
	\end{split}
\end{align}
The combination of \eqref{es5}
and \eqref{es9} shows that
\begin{align}\label{esna2f2}
	\begin{split}
		\|\nabla^2 f_2\|_{L^2}&\lesssim 	\|\Delta Y\|_{H^2}^{\frac12}\|\partial_1\nabla Y\|_{H^2}^\frac12\|\Delta Y_t\|_{H^2}+\|\Delta Y\|_{H^2}\|\Delta Y_t\|_{H^2}^\frac12\|\partial_1\Delta Y_t\|_{H^1}^\frac12\\&\quad +\|\Delta Y\|_{H^2}^{\frac{5}{6}}\|\partial_1\nabla Y\|_{H^2}^{\frac{1}{6}}\|\Delta Y_t\|_{H^2}.
	\end{split}
\end{align}
It then follows from \eqref{esf2} and \eqref{esna2f2} that
\begin{align*}
	\begin{split}
		\| f_2\|_{H^2}&\lesssim (t+1)^{-\frac{7}{12}}\|\Delta Y\|_{H^2}\big(\sqrt{t+1}\|\Delta Y_t\|_{H^2}\big)^{\frac{5}{6}}\big({(t+1)}\|\partial_1\Delta Y_t\|_{H^1}\big)^{\frac{1}{6}}\\&\quad+(t+1)^{-\frac{3}{4}}\|\Delta Y\|_{H^2}\big(\sqrt{t+1}\|\Delta Y_t\|_{H^2}\big)^{\frac{1}{2}}\big({(t+1)}\|\partial_1\Delta Y_t\|_{H^1}\big)^{\frac{1}{2}}\\&\quad+(t+1)^{-\frac{3}{4}}\|\Delta Y\|_{H^2}^\frac12\big(\sqrt{t+1}\|\p_1\nabla  Y\|_{H^2}\big)^{\frac{1}{2}}\big(\sqrt{t+1}\|\Delta Y_t\|_{H^2}\big)\\&\quad+(t+1)^{-\frac{7}{12}}\|\Delta Y\|_{H^2}^\frac56\big(\sqrt{t+1}\|\p_1\nabla  Y\|_{H^2}\big)^{\frac{1}{6}}\big(\sqrt{t+1}\|\Delta Y_t\|_{H^2}\big)\\&\lesssim (t+1)^{-\frac{7}{12}}E(t)^\frac12D(t)^\frac12.
	\end{split}
\end{align*}
\textbf{Estimate of $\| \partial_1f_2\|_{H^1}$}.

It is clear that
\begin{align*}
	\partial_1f_2&= \partial_1\nabla Y \nabla^2 Y_t +\nabla Y\partial_1\nabla^2Y_t,\\\partial_1\nabla f_2&= \partial_1\nabla Y \nabla^3 Y_t +\partial_1\nabla^2 Y \nabla^2 Y_t +\nabla^2 Y \partial_1\nabla^2 Y_t +\nabla Y\partial_1\nabla^3Y_t.
\end{align*}
By the  H\"older and Sobolev inequalities and \eqref{es9}, we have
\begin{align}\label{esp1f2}
	\begin{split}
		\|\partial_1 f_2\|_{L^2}	&\lesssim  \|\partial_1\nabla Y\|_{L^\infty}\|\nabla^2Y_t\|_{L^2}+\|\nabla Y\|_{L^\infty}\|\partial_1\nabla^2Y_t\|_{L^2}\\&\lesssim \|\partial_1\nabla Y\|_{H^2}\|\Delta Y_t\|_{H^2}+\|\Delta Y\|_{H^2}\|\partial_1\Delta Y_t\|_{H^1},
	\end{split}
\end{align}
and
\begin{align}\label{esp1naf2}
	\begin{split}
		\|\partial_1\nabla  f_2\|_{L^2}	&\lesssim  \|\partial_1\nabla Y\|_{L^\infty}\|\nabla^3Y_t\|_{L^2}+\|\partial_1\nabla^2Y\|_{L^2}\|\nabla^2 Y_t\|_{L^\infty}\\&\quad+\|\nabla^2 Y\|_{L^\infty}\|\partial_1\nabla^2Y_t\|_{L^2}+\|\nabla Y\|_{L^\infty}\|\partial_1\nabla^3Y_t\|_{L^2}\\&\lesssim \|\partial_1\nabla Y\|_{H^2}\|\Delta Y_t\|_{H^2}+\|\Delta Y\|_{H^2}\|\partial_1\Delta Y_t\|_{H^1}.
	\end{split}
\end{align}
Thus, the combination of \eqref{esp1f2} and \eqref{esp1naf2} yields
\begin{align*}
	\begin{split}
		\|\partial_1 f_2\|_{H^1}&\lesssim (t+1)^{-1} \big(\sqrt{t+1}\|\partial_1\nabla Y\|_{H^2}\big)\big(\sqrt{t+1}\|\Delta Y_t\|_{H^2}\big)\\&\quad+(t+1)^{-1}\|\Delta Y\|_{H^2}\big((t+1)\|\p_1\Delta Y_t\|_{H^2}\big)\\&\lesssim (t+1)^{-1}E(t)^\frac12D(t)^\frac12.
	\end{split}
\end{align*}
\end{proof}

Now we turn to the proof of Lemma \ref{Lem2}.

\noindent
{\bf{ Estimate of $I_1=(f_1| Y_t-\frac{1}{4}\Delta Y-\f{1}{4}(t+1) \D Y_t)_{H^2}.$}}

By the Cauchy-Schwarz inequality and \eqref{esfH2}, we obtain
\begin{align*}
	|I_1|
&\lesssim \|f_1\|_{H^2}\big(\|Y_t\|_{H^2}+\|\Delta Y\|_{H^2}+(t+1)\|\Delta Y_t\|_{H^2}\big)\\
%
&\lesssim (t+1)^{-\frac{7}{12}}E(t)^\frac12D(t)^\frac12\left(E(t)^\frac12+(t+1)^{\frac12}D(t)^\frac12\right)\\&\lesssim (t+1)^{-\frac{7}{12}}E(t)D(t)^\frac12+E(t)^\frac12D(t).
\end{align*}
{\bf{Estimate of $I_2=(f_1|\frac{1}{16}(t+1)\partial_1^2\Delta Y)_{H^1}.$}}

By using the definition of $I_2$ and the integration by parts, we have
\begin{align*}
I_2
&=(f_1|\frac{1}{16}(t+1)\partial_1^2\Delta Y)_{L^2}-(\D f_1|\frac{1}{16}(t+1) \partial_1^2\Delta Y)_{L^2}.
\end{align*}
Applying the Cauchy-Schwarz inequality and using \eqref{esfH2}, we obtain
\begin{align*}
	|I_{2}|
&\lesssim \|f_1\|_{H^2}\|(t+1)\partial^2_1\Delta Y\|_{L^2}\\&\lesssim (t+1)^{-\frac{1}{12}}E(t)^\frac12D(t)^\frac12\sqrt{t+1}\|\partial^2_1\nabla Y\|_{H^1}\\
&\lesssim E(t)^{\frac12}D(t).
\end{align*}
{\bf{Estimate of $I_3=(f_1|\frac{1}{32}(t+1)^2\partial_1^2\Delta Y_t)_{H^1}$}}.

By using the integration by parts, we have		
\begin{align*}
	I_3&=-\big(\partial_1f_1|(t+1)^2\frac{1}{32}\partial_1\Delta Y_t\big)_{L^2}-\big(\partial_1\nabla f_1|\frac{1}{32}(t+1)^2\partial_1\nabla\Delta Y_t\big)_{L^2}.
\end{align*}
Hence, by\eqref{esp1f1H1}, one has
\begin{align*}
|I_3|&\lesssim \|\p_1f_1\|_{H^1}(t+1)^2\|\partial_1\Delta Y_t\|_{H^1}\\
%
&\lesssim (t+1)^{-1}E(t)^\frac12D(t)^\frac12 (t+1)D(t)^\frac12\lesssim E(t)^\frac12D(t).
\end{align*}

\noindent
{\bf{Estimate  of $J_1=(f_2| Y_t-\frac{1}{4}\Delta Y-\f{1}{4}(t+1) \D Y_t)_{H^2}$}}.

Similar to the estimate of $I_1$, by \eqref{esf2H2}, we have
\begin{align*}
|J_{1}|&\lesssim \|f_2\|_{H^2}\big(\|Y_t\|_{H^2}+\|
\Delta Y\|_{H^2}+ (t+1)\|\Delta Y_t\|_{H^2}\big)\\
%
&\lesssim (t+1)^{-\frac{7}{12}}E(t)^\frac12D(t)^\frac12\big(E(t)^\frac12+(t+1)^\frac12D(t)^\frac12\big)\\&\lesssim (t+1)^{-\frac{7}{12}}E(t)D(t)^\frac12+E(t)^\frac12D(t).
\end{align*}
{\bf{Estimate of  $J_2=\left(f_2|\frac{1}{16} (t+1)\partial_1^2\Delta Y\right)_{H^1}.$}}

Similar to the estimate of $I_2$, using integration by parts, together with \eqref{esf2H2}, we obtain
\begin{align*}
	J_2&\lesssim \|f_2\|_{H^2}\big((t+1)\|\partial^2_1\Delta Y\|_{L^2}\big)\\&\lesssim (t+1)^{-\frac{7}{12}}E(t)^\frac12D(t)^\frac12(t+1)^\frac12 \big(\sqrt{t+1}\|\partial^2_1\nabla Y\|_{H^1}\big)\\&\lesssim E(t)^\frac12 D(t).
\end{align*}
{\bf{Estimate of $J_3=\big(f_2|\frac{1}{32}(t+1)^2\partial_1^2\Delta Y_t\big)_{H^1}$}}.

Applying integration by parts, we get
\begin{align*}
	J_3&=\big(\partial_1f_2|-\frac{1}{32}(t+1)^2\partial_1\Delta Y_t\big)_{L^2}+\big(\partial_1\nabla f_2|-\frac{1}{32}(t+1)^2\partial_1\nabla\Delta Y_t\big)_{L^2}.
\end{align*}
By \eqref{esp1f2H1}, we get
\begin{align*}
	|J_{3}|&\lesssim  \|\partial_1f_2\|_{H^1}\|(t+1)^2\partial_1\Delta Y_t\|_{H^1}\lesssim E(t)^\frac12 D(t).
\end{align*}

In conclusion, combining the estimates of $I_1$ to $I_3$ and $J_1$ to $J_3$,
then performing time integration over $[0,t]$, we obtain
\begin{align}\label{f1f2}
	\begin{split}
			&\int_0^t \Big|\big( \nabla (\nabla Y\nabla Y_\tau) | Y_\tau -\frac14 \D Y -\f{1}{4}(\tau+1) \D Y_\tau\big)_{H^2}\Big|\,\mathrm{d}\tau\\
&\quad
		+\int_0^t\Big|\big(\nabla (\nabla Y\nabla Y_\tau) | \f{1}{16} (\tau+1)\D\p_1^2 Y+\f{1}{32} (\tau+1)^2 \D\p_1^2 Y_\tau \big)_{H^1}\Big| \,\mathrm{d}\tau\\
&\lesssim \int_0^t(\tau+1)^{-\frac{7}{12}}E(\tau)D(\tau)^\frac12\,\,\mathrm{d}\tau+\int_0^tE(\tau)^\frac12D(\tau)\,\mathrm{d}\tau\\
&\lesssim \sup_{0\leq \tau\leq t}E(\tau)\left(\int_0^tD(\tau)\,\mathrm{d}\tau\right)^\frac12\left(\int_{0}^{\infty}(\tau+1)^{-\frac{7}{6}}\,\mathrm{d}\tau\right)^\frac12+\left(\sup_{0\leq \tau\leq t}E(\tau)\right)^\frac12\int_0^tD(\tau)\,\mathrm{d}\tau\\&\lesssim \mathcal{E}(t)^\frac32.
	\end{split}
\end{align}

\subsection{Estimates of higher order nonlinear terms}
In this subsection, we deal with the higher order terms in $f$, that is \[f_3:=\nabla\Big(\big((\nabla Y)^2+(\nabla Y)^3+(\nabla Y)^4\big)\nabla Y_t\Big) \]
in \eqref{exp-f}.
Similar to \eqref{f12} in Section \ref{quad}, we have
\begin{align*}
&(f_3| Y_t-\frac{1}{4}\Delta Y-\f{1}{4}(t+1) \D Y_t)_{H^2}
+(f_3|\frac{1}{16}(t+1)\partial_1^2\Delta Y+\frac{1}{32}(t+1)^2\partial_1^2\Delta Y_t)_{H^1},\nonumber\\\nonumber
&\lesssim  \|f_3\|_{H^2}\big(\|Y_t\|_{H^2}+\|\Delta Y\|_{H^2}+(t+1)\|\Delta Y_t\|_{H^2}+\|(t+1)\partial^2_1\Delta Y\|_{L^2}\big)\\
&\quad +  \|\partial_1 f_3\|_{H^1}\|(t+1)^2\partial_1\Delta Y_t\|_{H^1} .
\end{align*}
Hence it reduces to the estimate $\|f_3\|_{H^2}$ and $\|\p_1f_3\|_{H^1}$. For $f_3$, we write
\begin{align*}
	f_3=\sum_{k=2}^{4}\big[k(\nabla Y)^{k-1}\nabla^2Y\nabla Y_t+(\nabla Y)^k\nabla^2 Y_t\big]=\sum_{k=2}^{4}\big[k(\nabla Y)^{k-1}f_1+(\nabla Y)^{k-1}f_2\big] ,
\end{align*}
where $f_1$ and $f_2$ are defined by \eqref{f12-d} in Section \ref{quad}. By  the H\"older and Sobolev inequalities, we get for $k=2$ and $i=1,2$,
\begin{align*}
	\|\nabla Yf_i\|_{H^2}\lesssim \|\nabla Yf_i\|_{L^2}+\|\nabla^2(\nabla Yf_i)\|_{L^2}\lesssim \|\Delta Y\|_{H^2}\|f_i\|_{H^2}.
\end{align*}
For $k=3$ and $i=1,2$, we have
\begin{align*}
	\|(\nabla Y)^2f_i\|_{H^2}&\lesssim \|\nabla Y(\nabla Yf_i)\|_{L^2}+\|\nabla^2\big(\nabla Y(\nabla Yf_i)\big)\|_{L^2}\\&\lesssim \|\Delta Y\|_{H^2}\|\nabla Yf_i\|_{H^2}\lesssim \|\Delta Y\|_{H^2}^2\|f_i\|_{H^2}.
\end{align*}
Similarly, for $k=4$ and $i=1,2$,
\begin{align*}
	\|(\nabla Y)^3f_i\|_{H^2}\lesssim \|\Delta Y\|_{H^2}^3\|f_i\|_{H^2}.
\end{align*}
Summing up the above estimates and noting that $\mathcal{E}(t)\leq \delta$, we get
\begin{align*}
	\|f_3\|_{H^2}&\lesssim (1+\|\Delta Y\|_{H^2}^2)\|\Delta Y\|_{H^2}(\|f_1\|_{H^2}+\|f_2\|_{H^2})\\&\lesssim (1+\delta)\|\Delta Y\|_{H^2}(\|f_1\|_{H^2}+\|f_2\|_{H^2})\\&\lesssim\|\Delta Y\|_{H^2}(\|f_1\|_{H^2}+\|f_2\|_{H^2}).
\end{align*}
Hence, it follows from \eqref{esfH2} and \eqref{esf2H2} that
\begin{align}\label{esf3H2}
	\begin{split}
		\|f_3\|_{H^2}&\lesssim \|\Delta Y\|_{H^2}(t+1)^{-\frac{7}{12}}E(t)^\frac12D(t)^\frac12\\&\lesssim (t+1)^{-\frac{7}{12}}E(t)D(t)^\frac12.
	\end{split}
\end{align}

On the other hand, note that
\begin{align*}
	\p_1f_3=\sum_{k=2}^{4}\big[k\p_1\big((\nabla Y)^{k-1}f_1\big)+\p_1\big((\nabla Y)^{k-1}f_2\big)\big].
\end{align*}
Then for $k=2$ and $i=1,2$, one has
\begin{align*}
	\|\partial_1(\nabla Yf_i)\|_{H^1}&\lesssim \|\partial_1(\nabla Yf_i)\|_{L^2}+\|\nabla \partial_1(\nabla Yf_i)\|_{L^2}\\&\lesssim \|\Delta Y\|_{H^2}\|\p_1f_i\|_{H^1}+\|\p_1\nabla  Y\|_{H^2}\|f_i\|_{H^2}.
\end{align*}
For $k=3$ and $i=1,2$,
\begin{align*}
	\|\p_1\big((\nabla Y)^{2}f_i\big)\|_{H^1}&\lesssim \|\partial_1\big(\nabla Y(\nabla Y f_i)\big)\|_{L^2}+\|\nabla \partial_1\big(\nabla Y(\nabla Yf_i)\big)\|_{L^2}\\&\lesssim \|\Delta Y\|_{H^2}\|\partial_1(\nabla Yf_i)\|_{H^1}+\|\p_1\nabla  Y\|_{H^2}\|\nabla Yf_i\|_{H^2}\\&\lesssim \|\Delta Y\|_{H^2}^2\|\p_1f_i\|_{H^1}+\|\Delta Y\|_{H^2}\|\p_1\nabla  Y\|_{H^2}\|f_i\|_{H^2}.
\end{align*}
Similarly, for $k=4$ and $i=1,2$,
\begin{align*}
	\|\p_1\big((\nabla Y)^{3}f_i\big)\|_{H^1}\lesssim \|\Delta Y\|_{H^2}^3\|\p_1f_i\|_{H^1}+\|\Delta Y\|_{H^2}^2\|\p_1\nabla  Y\|_{H^2}\|f_i\|_{H^2}.
\end{align*}
Hence, gathering the above estimates and using the ansatz that $\mathcal{E}(t)\leq \delta$, we derive
\begin{align*}
	\|\p_1f_3\|_{H^1}\lesssim \sum_{i=1}^{2}\big[\|\Delta Y\|_{H^2}\|\p_1f_i\|_{H^1}+\|\p_1\nabla  Y\|_{H^2}\|f_i\|_{H^2}\big].
\end{align*}
Consequently, by \eqref{esfH2}, \eqref{esf2H2}, \eqref{esp1f1H1} and \eqref{esp1f2H1}, one has
\begin{align}\label{esp1f3H1}
	\begin{split}
		\|\p_1f_3\|_{H^1}&\lesssim \|\Delta Y\|_{H^2}(t+1)^{-1}E(t)^\frac12D(t)^\frac12\\&\quad+(t+1)^{-\frac12}\big(\sqrt{t+1}\|\p_1\nabla  Y\|_{H^2}\big)(t+1)^{-\frac{7}{12}}E(t)^\frac12D(t)^\frac12\\&\lesssim (t+1)^{-1}E(t)D(t)^\frac12+(t+1)^{-\frac{13}{12}}E(t)D(t)^\frac12.
	\end{split}
\end{align}
Now we take the integral in time over $[0,t]$,
by \eqref{esf3H2} and \eqref{esp1f3H1},
using the similar method as that in Section \ref{quad}, we derive
\begin{align}\label{f3}
	\begin{split}
		&\int_0^t \Big|\big( f_3| Y_\tau -\frac14 \D Y -\f{1}{4}(\tau+1) \D Y_\tau\big)_{H^2}\big|\,\mathrm{d}\tau\\&\quad
		+\int_0^t\Big|\big(f_3| \f{1}{16} (\tau+1)\D\p_1^2 Y+\f{1}{32} (\tau+1)^2 \D\p_1^2 Y_\tau \big)_{H^1}\big| \,\mathrm{d}\tau\\&\lesssim \mathcal{E}(t)^2.
	\end{split}
\end{align}

\subsection{Estimate of the pressure term}
In this subsection, we are going to estimate the pressure term in $f$, that is $A\nabla p$ in \eqref{exp-f} with
\begin{align}\label{eqnap}
	\begin{split}
		\nabla p&=-\Delta^{-1}\nabla \textrm{div}\big(( A^\top A-I) \nabla p\big)\\&\quad+\Delta^{-1}\nabla \operatorname{div}\Bigl(A^\top\operatorname{div}\big(A^\top(\p_1 Y\otimes \p_1Y-Y_t\otimes  Y_t)\big)\Bigr).
	\end{split}
\end{align}
From the estimates in the above two subsections, we need to estimate $\|A \nabla p\|_{H^2}$ and $\|\p_1(A\nabla p)\|_{H^1}$.

We first derive some estimates about $A$.
Using \eqref{expA} and the ansatz that $\mathcal{E}(t)\leq \delta$, we get
\begin{align}\label{esALinf}
	\|A\|_{L^\infty}&\lesssim 1+\|\nabla Y\|_{L^\infty}+\|\nabla Y\|_{L^\infty}^2\lesssim 1+\|\Delta Y\|_{H^1}+\|\Delta Y\|_{H^1}^2\lesssim  1,
\end{align}
and
\begin{align}\label{esnaAH2}
	\|\nabla A\|_{H^2}&\lesssim \|\nabla^2 Y\|_{H^2}+\|\nabla^2 Y\|_{H^2}^2\lesssim \|\Delta Y\|_{H^2}.
\end{align}
Moreover, due to \eqref{exp-A2}, we deduce
\begin{equation}\label{esnaA2}
	\|\nabla(A^\top A-I)\|_{H^1}\lesssim \|\Delta Y\|_{H^2}.
\end{equation}

Now we are ready to derive the estimate of $\|A\nabla p\|_{H^2}$.
Note that in \eqref{eqnap}, $\mathcal{R}:=\Delta^{-1}\nabla \textrm{div}$ is a Riesz transform.
By the boundedness of $\mathcal{R}$ in $L^2$, together with the H\"older and Sobolev inequalities, we have
\begin{align}\label{nap1}
	\|\mathcal{R}\big(( A^\top A-I) \nabla p\big)\|_{H^2}\lesssim \|\nabla( A^\top A-I)\|_{H^1}\|\nabla p\|_{H^2} \lesssim \|\Delta Y\|_{H^2}\|\nabla p\|_{H^2}.
\end{align}
On the other hand, using \eqref{esALinf} and \eqref{esnaAH2}, we obtain
\begin{align}\label{nap2-0}
	\begin{split}
		&\|\mathcal{R}\Bigl(A^\top\operatorname{div}\big(A^\top(\p_1 Y\otimes \p_1Y-Y_t\otimes  Y_t)\big)\Bigr)\|_{H^2}\\&\lesssim (\|A\|_{L^\infty}+\|\nabla A\|_{H^1}) \|\nabla \big(A^\top(\p_1 Y\otimes \p_1Y-Y_t\otimes  Y_t)\big)\|_{H^2}\\&\lesssim\|\nabla \big(A^\top(\p_1 Y\otimes \p_1Y-Y_t\otimes  Y_t)\big)\|_{H^2}.
	\end{split}
\end{align}
Then by \eqref{esALinf} and \eqref{esnaAH2}, we compute
\begin{align}\label{nap2-1}
	\begin{split}
		&\|\nabla \big(A^\top(\p_1 Y\otimes \p_1Y-Y_t\otimes  Y_t)\big)\|_{H^2} \\&\lesssim \|\nabla A\|_{H^2}\big(\|\nabla \p_1Y\otimes \p_1Y\|_{H^1}+\|\nabla Y_t\otimes  Y_t\|_{H^1}\big) \\&\quad+(\|A\|_{L^\infty}+\|\nabla A\|_{H^1})\big(\|\nabla\p_1 Y\otimes \p_1Y\|_{H^2}+\|\nabla Y_t\otimes  Y_t\|_{H^2}\big) \\&\lesssim \|\nabla \p_1 Y\|_{H^2}^2+\|\nabla Y_t\|_{H^2}^2.
	\end{split}
\end{align}
Combining \eqref{nap2-0} with \eqref{nap2-1}, one has
\begin{align}\label{nap2}
	\begin{split}
		&\Vert\mathcal{R}\Bigl(A^\top\operatorname{div}\big(A^\top(\p_1 Y\otimes \p_1Y-Y_t\otimes  Y_t)\big)\Bigr)\Vert_{H^2}\lesssim \|\nabla \p_1 Y\|_{H^2}^2+\|\nabla Y_t\|_{H^2}^2.
	\end{split}
\end{align}
Then it follows from \eqref{eqnap}, \eqref{nap1} and \eqref{nap2} that
\begin{align}\label{napH2-1}
	\|\nabla p\|_{H^2}&\leq C\|\Delta Y\|_{H^2}\|\nabla p\|_{H^2}+C (\|\nabla \partial_1Y\|_{H^2}^2+\|\nabla Y_t\|_{H^2}^2).
\end{align}
By taking $\delta$ small enough such that \[C\mathcal{E}(t)\leq C\delta \leq \frac12,\]
then we get from \eqref{napH2-1} that
\begin{align}\label{napH2}	\|\nabla p\|_{H^2}&\lesssim \|\nabla \partial_1Y\|_{H^2}^2+\|\nabla Y_t\|_{H^2}^2.
\end{align}
Moreover, by \eqref{esALinf}, \eqref{esnaAH2} and \eqref{napH2}, we have
\begin{align}\label{Anap}
	\|A\nabla p\|_{H^2}\lesssim (1+\|A\|_{L^\infty}+\|\nabla A\|_{H^1})\|\nabla p\|_{H^2}\lesssim \|\nabla \partial_1Y\|_{H^2}^2+\|\nabla Y_t\|_{H^2}^2.
\end{align}

Now we consider the term
\begin{align*}
	K&=	 ( A\na p | Y_t -\frac14 \D Y -\f{1}{4} (t+1) \D Y_t)_{H^2}
	\\&\quad+( A\na p |\f{1}{16} (t+1) \p_1^2\D Y)_{H^1}+(A \na p | -\f{1}{16} (t+1)^2\D\p_1^2 Y_t )_{H^1}\\&:=K_{11}+K_{12}+K_{13}.
\end{align*}
For $K_{11}$, using the Cauchy-Schwarz inequality and \eqref{Anap}, we obtain
\begin{align*}
	|K_{11}|&\lesssim \|A\nabla p\|_{H^2}\big(\|Y_t\|_{H^2}+\|\Delta Y\|_{H^2}+\|(t+1)\Delta Y_t\|_{H^2}\big)\\&\lesssim \big(\|\nabla \partial_1Y\|_{H^2}^2+\|\nabla Y_t\|_{H^2}^2\big)\big(\|Y_t\|_{H^2}+\|\Delta Y\|_{H^2}\big)\\&\quad+(\sqrt{t+1}\|\nabla \partial_1Y\|_{H^2})\|\nabla \partial_1Y\|_{H^2}\big(\sqrt{t+1}\|\Delta Y_t\|_{H^2}\big)\\&\quad+(\sqrt{t+1}\|\nabla Y_t\|_{H^2})\|\nabla Y_t\|_{H^2}\big(\sqrt{t+1}\|\Delta Y_t\|_{H^2}\big)\\&\lesssim E(t)^\frac12 D(t).
\end{align*}
To deal with $K_{12}$, applying integration by parts, we get
\begin{align*}
	K_{12}=( A\na p |\f{1}{16} (t+1) \p_1^2\D Y)_{L^2}+(\Delta( A\na p) |-\f{1}{16} (t+1) \p_1^2\D Y)_{L^2},
\end{align*}
which gives rise to
\begin{align*}
	|K_{12}|&\lesssim \|A\nabla p\|_{H^2}\|(t+1)\partial_1^2\Delta Y\|_{L^2}\\&\lesssim (\sqrt{t+1}\|\nabla \partial_1Y\|_{H^2})\|\nabla \partial_1Y\|_{H^2}\big(\sqrt{t+1}\|\partial_1^2\nabla Y\|_{H^1}\big)\\&\quad+(\sqrt{t+1}\|\nabla Y_t\|_{H^2})\|\nabla Y_t\|_{H^2}\big(\sqrt{t+1}\|\partial_1^2\nabla Y\|_{H^1}\big)\\&\lesssim E(t)^\frac12 D(t).
\end{align*}
For  $K_{13}$, applying integration by parts, we get
\begin{align}\label{K13}
	\begin{split}
		|K_{13}|&=|(\partial_1(A \na p) | \f{1}{16} (t+1)^2\D\p_1 Y_t )_{H^1}|\\&\lesssim \|\partial_1(A \na p)\|_{H^1}(t+1)^2\|\D\p_1 Y_t\|_{H^1}.
	\end{split}
\end{align}

It remains to estimate $\|\partial_1(A \na p)\|_{H^1}$. By \eqref{expA}, we get
\begin{align}\label{esp1naA}
	\begin{split}
		\|\partial_1A\|_{H^2}&\lesssim \|\nabla\partial_1 Y\|_{H^2}+\|\nabla\partial_1 Y\|_{H^2}\|\D Y\|_{H^2}\\&\lesssim \|\nabla\partial_1 Y\|_{H^2}(1+\|\Delta Y\|_{H^2})\lesssim \|\nabla\partial_1 Y\|_{H^2}.
	\end{split}
\end{align}
In addition, due to \eqref{exp-A2}, we get
\begin{align}\label{esp1A2}
	\|\p_1( A^\top A-I)\|_{H^1}\lesssim \|\nabla\partial_1 Y\|_{H^1}.
\end{align}
Note that
\begin{align*}
	\|\partial_1(A \na p)\|_{H^1}\lesssim \|\partial_1A \na p\|_{H^1}+\|A \p_1\na p\|_{H^1}.
\end{align*}
It follows from \eqref{napH2} and \eqref{esp1naA} that
\begin{align}\label{p1Anp}
	\|\partial_1A \na p\|_{H^1}\lesssim \|\partial_1A\|_{H^2}\|\nabla p\|_{H^1}\lesssim \|\nabla\partial_1 Y\|_{H^2}\big(\|\nabla \partial_1Y\|_{H^2}^2+\|\nabla Y_t\|_{H^2}^2\big).
\end{align}
To deal with $\|A \p_1\na p\|_{H^1}$, we first estimate $\|\p_1\na p\|_{H^1}$. From \eqref{eqnap}, \eqref{esnaA2} and \eqref{esp1A2}, we could derive that
\begin{align}\label{p1nap1}
	\begin{split}
		&\|\mathcal{R}\p_1\big(( A^\top A-I) \nabla p\big)\|_{H^1}\\&\lesssim \|\p_1( A^\top A-I)\|_{H^1}\|\nabla p\|_{H^2}+\|\nabla( A^\top A-I)\|_{H^1}\|\p_1\nabla p\|_{H^1} \\&\lesssim \|\nabla \p_1Y\|_{H^2}\big(\|\nabla \partial_1Y\|_{H^2}^2+\|\nabla Y_t\|_{H^2}^2\big)+\|\Delta Y\|_{H^2}\|\p_1\nabla p\|_{H^1}.
	\end{split}
\end{align}
On the other hand, using \eqref{esALinf}, \eqref{esnaAH2}, \eqref{nap2-1} and \eqref{esp1naA}, we infer
\begin{align}\label{p1nap2-0}
	\begin{split}
		&\|\mathcal{R}\p_1\Bigl(A^\top\operatorname{div}\big(A^\top(\p_1 Y\otimes \p_1Y-Y_t\otimes  Y_t)\big)\Bigr)\|_{H^1}\\&\lesssim \|\p_1 A\|_{H^2}\|\nabla \big(A^\top(\p_1 Y\otimes \p_1Y-Y_t\otimes  Y_t)\big)\|_{H^1}\\&\quad+(\|A\|_{L^\infty}+\|\nabla A\|_{H^1})\|\nabla\p_1 \big(A^\top(\p_1 Y\otimes \p_1Y-Y_t\otimes  Y_t)\big)\|_{H^1}
		\\&\lesssim \|\nabla \p_1 Y\|_{H^2}\big(\|\nabla \p_1 Y\|_{H^2}^2+\|\nabla Y_t\|_{H^2}^2\big)\\&\quad+ \|\nabla\p_1 \big(A^\top(\p_1 Y\otimes \p_1Y-Y_t\otimes  Y_t)\big)\|_{H^1}.
	\end{split}
\end{align}
Then applying \eqref{esALinf}, \eqref{esnaAH2} and \eqref{esp1naA}, we compute
\begin{align}\label{p1nap2-1}
	\begin{split}
		&\|\nabla\p_1 \big(A^\top(\p_1 Y\otimes \p_1Y-Y_t\otimes  Y_t)\big)\|_{H^1}\\&\lesssim \|\p_1 A\|_{H^2}\big(\|\nabla\p_1 Y\otimes \p_1Y\|_{H^2}+\|\nabla Y_t\otimes  Y_t\|_{H^2}\big)\\&\quad+(\|A\|_{L^\infty}+\|\nabla A\|_{H^1})\big(\|\nabla\p_1(\p_1 Y\otimes \p_1Y)\|_{H^1}+\|\nabla\p_1( Y_t\otimes  Y_t)\|_{H^1}\big)\\&\lesssim \|\nabla\partial_1 Y\|_{H^2}\big(\|\nabla \p_1 Y\|_{H^2}^2+\|\nabla Y_t\|_{H^2}^2\big)\\&\quad+\|\nabla \p_1 Y\|_{H^2}\|\nabla \p_1^2 Y\|_{H^1}+\|\nabla Y_t\|_{H^2}\|\nabla\p_1 Y_t\|_{H^1}.
	\end{split}
\end{align}
The combination of \eqref{p1nap2-0} and \eqref{p1nap2-1} yields
\begin{align}\label{p1nap2}
	\begin{split}
		&\|\mathcal{R}\p_1\Bigl(A^\top\operatorname{div}\big(A^\top(\p_1 Y\otimes \p_1Y-Y_t\otimes  Y_t)\big)\Bigr)\|_{H^1}\\&\lesssim \|\nabla\partial_1 Y\|_{H^2}\big(\|\nabla \p_1 Y\|_{H^2}^2+\|\nabla Y_t\|_{H^2}^2\big)\\&\quad+\|\nabla \p_1 Y\|_{H^2}\|\nabla \p_1^2 Y\|_{H^1}+\|\nabla Y_t\|_{H^2}\|\nabla\p_1 Y_t\|_{H^1}.
	\end{split}
\end{align}
Similar to the derivation of \eqref{napH2}, according to \eqref{p1nap1} and \eqref{p1nap2}, if we take $\delta$ sufficiently small, we can derive
\begin{align*}
	\|\p_1\nabla p\|_{H^1}&\lesssim \|\nabla \p_1Y\|_{H^2}\big(\|\nabla \partial_1Y\|_{H^2}^2+\|\nabla Y_t\|_{H^2}^2\big)\\&\quad+\|\nabla \p_1 Y\|_{H^2}\|\nabla \p_1^2 Y\|_{H^1}+\|\nabla Y_t\|_{H^2}\|\nabla\p_1 Y_t\|_{H^1}.
\end{align*}
Hence,
\begin{align}\label{Ap1np}
	\begin{split}
		\|A\p_1\nabla p\|_{H^1}&\lesssim (\|A\|_{L^\infty}+\|\nabla A\|_{H^1})\|\p_1\nabla p\|_{H^1}\\&\lesssim \|\nabla \p_1Y\|_{H^2}\big(\|\nabla \partial_1Y\|_{H^2}^2+\|\nabla Y_t\|_{H^2}^2\big)\\&\quad+\|\nabla \p_1 Y\|_{H^2}\|\nabla \p_1^2 Y\|_{H^1}+\|\nabla Y_t\|_{H^2}\|\nabla\p_1 Y_t\|_{H^1}.
	\end{split}
\end{align}
Combining \eqref{p1Anp} and \eqref{Ap1np}, we deduce
\begin{align}\label{esp1Anap}
	\begin{split}
		\|\partial_1(A \na p)\|_{H^1}&\lesssim \|\nabla\partial_1 Y\|_{H^2}\big(\|\nabla \partial_1Y\|_{H^2}^2+\|\nabla Y_t\|_{H^2}^2\big)\\&\quad+\|\nabla \p_1 Y\|_{H^2}\|\nabla \p_1^2 Y\|_{H^1}+\|\nabla Y_t\|_{H^2}\|\nabla\p_1 Y_t\|_{H^1}.
	\end{split}
\end{align}
Plugging \eqref{esp1Anap} into \eqref{K13}, we obtain
\begin{align*}
	|K_{13}|&\lesssim \left[(t+1)\left(\|\nabla\partial_1 Y\|_{H^2}^2+\|\nabla Y_t\|_{H^2}^2\right)\right]\|\nabla\partial_1 Y\|_{H^2}(t+1)\|\D\p_1 Y_t\|_{H^1}\\&\quad+\big(\sqrt{t+1}\|\nabla \partial_1 Y\|_{H^2}\big)\big(\sqrt{t+1}\|\nabla\partial_1^2 Y\|_{H^1}\big)(t+1)\|\D\p_1 Y_t\|_{H^1}\\&\quad+\left((t+1)\|\nabla \p_1 Y_t\|_{H^1}\right)\|\nabla Y_t\|_{H^1}(t+1)\|\D\p_1 Y_t\|_{H^1}\\&\lesssim E(t)D(t)+E(t)^\frac12D(t).
\end{align*}
As a consequence,
\begin{align}\label{f4}
	\begin{split}
			&\int_0^t \Big|\big( A\nabla p| Y_\tau -\frac14 \D Y -\f{1}{4}(\tau+1) \D Y_\tau\big)_{H^2}\big|\,\mathrm{d}\tau\\&\quad
		+\int_0^t\Big|\big(A\nabla p| \f{1}{16} (\tau+1)\D\p_1^2 Y+\f{1}{32} (\tau+1)^2 \D\p_1^2 Y_\tau \big)_{H^1}\big| \,\mathrm{d}\tau\\&\lesssim \mathcal{E}(t)^2+\mathcal{E}(t)^\frac32.
	\end{split}
\end{align}

The combination of \eqref{f1f2}, \eqref{f3} and \eqref{f4} completes the proof of Lemma \ref{Lem2}.

\section{Proof of Theorem \ref{th1}}\label{secproof1}
In this section, we prove Theorem \ref{th1}.
\begin{proof}[Proof of Theorem \ref{th1}]
The local well-posedness of \eqref{A10} can be obtained by a standard argument. To extend the local solution to a global one, we only need to prove the global \textit{a priori} estimate.
 Indeed, according to Lemma \ref{Lem1} and Lemma \ref{Lem2}, we get that if $\mathcal{E}(t)\leq \delta$ where $\delta$ is the small constant determined in Lemma \ref{Lem2}, then
	\begin{align}\label{Et}
		\mathcal{E}(t)\leq C_0(\|  Y_1\|_{H^3}^2
		+\|\pa_1Y_0\|_{H^3}^2
		+\|  \D Y_0\|_{H^2}^2)+C_0\mathcal{E}(t)^\frac32+C_0\mathcal{E}(t)^2
	\end{align}
holds for some constant $C_0>1$. In the sequel, we will complete the proof by using the standard  continuity argument. Firstly, by the definition of $\mathcal{E}(t)$, we get for some constant $C_1>1$ that
\begin{equation*}
	\mathcal{E}(0)\leq C_1(\|  Y_1\|_{H^3}^2
	+\|\pa_1Y_0\|_{H^3}^2
	+\|  \D Y_0\|_{H^2}^2).
\end{equation*}
Let us take
\begin{equation*}
   M=\max\{2C_0, C_1\},\quad\epsilon_0=\frac{1}{2M}\min\{\delta,\frac{1}{16C_0^2}\},
\end{equation*}
then under the assumption \eqref{asmp}, we have
\begin{equation*}
	\mathcal{E}(0)\leq M\epsilon_0.
\end{equation*}
By the continuity of the energy, there holds
\begin{equation*}
	\mathcal{E}(t)\leq 2M\epsilon_0,
\end{equation*}
for a  fixed time interval depending only on the initial energy.
Then $\mathcal{E}(t)\leq \delta$, hence \eqref{Et} holds. Therefore, we deduce from \eqref{asmp} that
\begin{align*}
	\mathcal{E}(t)&\leq C_0\epsilon_0+C_0\left(\sqrt{\mathcal{E}(t)}+\mathcal{E}(t)\right)\mathcal{E}(t)\\&\leq \frac{M}{2}\epsilon_0+C_0\left(\sqrt{2M\epsilon_0}+2M\epsilon_0\right)\mathcal{E}(t)\\&\leq \frac{M}{2}\epsilon_0+\frac12\mathcal{E}(t).
\end{align*}
This implies that
\begin{align*}
	\mathcal{E}(t)\leq M\epsilon_0.
\end{align*}
By the continuity argument, there holds $\mathcal{E}(t)\leq M\epsilon_0$ for all $t>0$ under the assumption \eqref{asmp}.
\end{proof}

\section{Proof of Theorem \ref{MTH}}

Now we are in a position to complete the proof of Theorem
\ref{MTH}: the global well-posedness in Eulerian coordinates.  Let us first recall Proposition 6.1 from \cite{XZ}.

\linespread{1.2}
{\begin{prop}\label{p5.1}
Let  $b_0-e_1\in H^{s}(\R^3)$ and $u_0\in H^s(\R^3)$ for
$s\in(\f32,3]$, \eqref{A1} has a unique solution $(u, b)$ on $[0,T]$ for
some $T>0$ so that $b-e_1\in C([0,T]; H^{s}(\R^3)),$ $  u\in
C([0,T]; H^{s}(\R^3))$ with $\na u\in L^2((0,T);H^s(\R^3))$ and $
\na p\in C([0,T]; H^{s-1}(\R^3)).$ Moreover, if $T^\ast$ is the life
span to this solution, and $T^\ast<\infty,$ one has
\begin{equation*}
\int_0^{T^\ast}\bigl(\|\na u(t)\|_{L^\infty}+\|
b(t)\|_{L^\infty}^2\bigr)\,\mathrm{d}t=\infty.
\end{equation*}
\end{prop}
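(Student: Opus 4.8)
The plan is to prove this as a self-contained local existence, uniqueness and continuation statement for the Eulerian system \eqref{A1} by an $H^s$ energy method, treating the velocity equation parabolically and the magnetic equation as a transport--stretching equation. Writing $B=b-e_1$ (so that $B\in H^s$ while $b\notin L^2$) and applying the Leray projection $\mathbb{P}$ to eliminate the pressure, the system becomes $\partial_t u-\Delta u=\mathbb{P}(-u\cdot\nabla u+b\cdot\nabla b)$ together with $\partial_t B+u\cdot\nabla B=\partial_1 u+B\cdot\nabla u$, with $\operatorname{div}u=\operatorname{div}B=0$. First I would construct solutions by a regularized scheme (a Friedrichs frequency truncation, or a mollified iteration), which turns the coupled system into an ODE in $H^s$; the only nontrivial input is a uniform-in-parameter a priori bound on a time interval depending solely on $\|u_0\|_{H^s}+\|b_0-e_1\|_{H^s}$. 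Compactness (Aubin--Lions, using the parabolic gain for $u$ and the equation for $\partial_t B$) then produces a solution, and weak continuity together with the energy identity upgrades it to $u,B\in C([0,T];H^s)$.

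The core is the uniform estimate. Applying $\Lambda^s=(-\Delta)^{s/2}$, pairing the velocity equation with $\Lambda^s u$ and the magnetic equation with $\Lambda^s B$, and adding, the two linear coupling terms $(\Lambda^s\partial_1 B\,|\,\Lambda^s u)$ and $(\Lambda^s\partial_1 u\,|\,\Lambda^s B)$ cancel after one integration by parts. For the remaining nonlinearities I would arrange each coefficient to be exactly one of the quantities appearing in the criterion. The Navier--Stokes term is handled in commutator form, $(\Lambda^s(u\cdot\nabla u)\,|\,\Lambda^s u)=([\Lambda^s,u\cdot\nabla]u\,|\,\Lambda^s u)$, the symmetric part vanishing by $\operatorname{div}u=0$, giving a factor $\|\nabla u\|_{L^\infty}$ via Kato--Ponce. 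The Lorentz term is written as $b\cdot\nabla b=\operatorname{div}(b\otimes b)$ and integrated by parts onto $\nabla\Lambda^s u$; Young's inequality absorbs $\tfrac18\|\nabla\Lambda^s u\|_{L^2}^2$ into the viscous dissipation and leaves $C\|b\|_{L^\infty}^2\|\Lambda^s B\|_{L^2}^2$ --- this is precisely where the quadratic weight $\|b\|_{L^\infty}^2$ in the criterion originates. The stretching term $B\cdot\nabla u$ is treated the same way, producing $\|b\|_{L^\infty}^2$ and $\|\nabla u\|_{L^\infty}$ factors. This yields $\frac{d}{dt}(\|u\|_{H^s}^2+\|B\|_{H^s}^2)+c\|\nabla u\|_{H^s}^2\lesssim \big(\|\nabla u\|_{L^\infty}+\|b\|_{L^\infty}^2+\cdots\big)(\|u\|_{H^s}^2+\|B\|_{H^s}^2)$, from which a short-time bound follows. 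Uniqueness I would obtain by estimating the difference of two solutions in $L^2$ (no derivative loss at this level) and closing by Gronwall with the $H^s$ bounds of each solution as coefficients. The pressure is recovered from $-\Delta p=\partial_i\partial_j(u_iu_j-b_ib_j)$; since $\operatorname{div}b=0$ annihilates the part of $b\otimes b$ linear in $B$, the right-hand side is quadratic in $(u,B)$, and the order-one operator $\nabla(-\Delta)^{-1}\partial_i\partial_j$ maps this $H^s$ datum into $\nabla p\in C([0,T];H^{s-1})$.

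The main obstacle is the magnetic estimate in the \emph{non-resistive} regime and the \emph{sharpness} of the continuation criterion. Because $B$ enjoys no diffusive smoothing, the transport commutator $([\Lambda^s,u\cdot\nabla]B\,|\,\Lambda^s B)$ cannot be absorbed by a $B$-dissipation; the naive Kato--Ponce bound produces a term $\|\Lambda^s u\|_{L^2}\|\nabla B\|_{L^\infty}$, whose factor $\|\nabla B\|_{L^\infty}$ is not among the criterion quantities. I would resolve this by a Hölder split $\|\Lambda^s u\|_{L^6}\|\nabla B\|_{L^3}$, using $\|\Lambda^s u\|_{L^6}\lesssim\|\nabla u\|_{H^s}$ to spend it against the viscous dissipation and $\|\nabla B\|_{L^3}\lesssim\|B\|_{H^s}$ (valid for $s>\tfrac32$), which suffices to close the short-time existence. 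To obtain the continuation criterion in exactly $\|\nabla u\|_{L^\infty}+\|b\|_{L^\infty}^2$, I would assume $\int_0^{T^\ast}(\|\nabla u\|_{L^\infty}+\|b\|_{L^\infty}^2)\,\mathrm{d}t<\infty$, first propagate $\|b(t)\|_{L^\infty}\le\|b_0\|_{L^\infty}\exp\!\big(\int_0^t\|\nabla u\|_{L^\infty}\,\mathrm{d}\tau\big)$ from the transport--stretching structure of the $b$-equation, and then control the residual transport commutator by a Beale--Kato--Majda/Brezis--Gallouet logarithmic inequality, bounding $\|\nabla B\|_{L^\infty}$ by $\|\nabla u\|_{L^\infty}\log(e+\|B\|_{H^s})$ up to lower-order terms; a logarithmic Gronwall argument then keeps $\|u\|_{H^s}+\|B\|_{H^s}$ finite as $t\uparrow T^\ast$, contradicting maximality unless the time integral diverges. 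Verifying that this logarithmic bound can indeed be driven by $\|\nabla u\|_{L^\infty}$ alone, and not by $\|\nabla b\|_{L^\infty}$, is the delicate point on which the stated form of the criterion rests.
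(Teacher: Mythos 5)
First, a point of comparison: the paper does not prove this statement at all --- it is quoted verbatim as Proposition 6.1 of Xu--Zhang \cite{XZ}, so your proposal is being measured against that reference rather than against anything in the present paper. Your plan for the local existence and uniqueness part is essentially sound and follows the known route (it is close in spirit to the commutator-estimate approach of \cite{Fe1,CMRR}): the Friedrichs scheme, the cancellation of the linear couplings $(\Lambda^s\partial_1 B\,|\,\Lambda^s u)+(\Lambda^s\partial_1 u\,|\,\Lambda^s B)$, the tame estimate $\|\Lambda^s(B\otimes B)\|_{L^2}\lesssim\|B\|_{L^\infty}\|\Lambda^s B\|_{L^2}$ producing the $\|b\|_{L^\infty}^2$ weight after Young, the $L^6$--$L^3$ splitting of the transport commutator (legitimate since $H^{s-1}\hookrightarrow L^3$ exactly for $s\ge 3/2$), and the $L^2$ uniqueness argument, which does close because the dangerous term $(\delta u\cdot\nabla B_2\,|\,\delta B)$ can be estimated by $\|\delta u\|_{L^6}\|\nabla B_2\|_{L^3}\|\delta B\|_{L^2}$ and the $\|\delta u\|_{L^6}$ factor absorbed into the viscous dissipation.

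The genuine gap is in the continuation criterion, and you have correctly located but not resolved it. The inequality you invoke, $\|\nabla B\|_{L^\infty}\lesssim\|\nabla u\|_{L^\infty}\log(e+\|B\|_{H^s})$, is not a valid estimate: Beale--Kato--Majda/Brezis--Gallouet-type inequalities bound a norm of a function by a weaker norm of the \emph{same} function times a logarithm of a stronger one; there is no mechanism by which $\|\nabla B\|_{L^\infty}$ for the transported field can be dominated pointwise in time by $\|\nabla u\|_{L^\infty}$. (One could try to propagate $\|\nabla B\|_{L^\infty}$ dynamically from the stretching equation, but that requires $\nabla^2 u\in L^1_t L^\infty$, which $H^{s+1}$ parabolic regularity gives only for $s>5/2$, not on the whole range $s\in(\tfrac32,3]$.) Without this input your own scheme does not close: the residual commutator term is
\begin{equation*}
\|\Lambda^s u\|_{L^6}\|\nabla B\|_{L^3}\|\Lambda^s B\|_{L^2}
\le \epsilon\|\nabla u\|_{H^s}^2 + C\,\|\nabla B\|_{L^3}^2\|B\|_{H^s}^2
\lesssim \epsilon\|\nabla u\|_{H^s}^2 + C\,\|B\|_{H^s}^4,
\end{equation*}
so the Gronwall coefficient contains $\|B\|_{H^s}^2$, which is precisely what the hypothesis $\int_0^{T^\ast}(\|\nabla u\|_{L^\infty}+\|b\|_{L^\infty}^2)\,\mathrm{d}t<\infty$ does \emph{not} control; the resulting Riccati-type inequality $y'\le g(t)y+Cy^2$ with $g\in L^1$ permits finite-time blow-up, so boundedness up to $T^\ast$ does not follow. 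Running the same estimate at the lower regularity $H^{3/2}$ to tame the coefficient $\|\nabla B\|_{L^3}\lesssim\|B\|_{H^{3/2}}$ reproduces the identical superquadratic structure one level down, so the difficulty is not an artifact of your choice of exponents. Closing the criterion in exactly the stated form requires trading top-order derivatives of $B$ for top-order derivatives of $u$ (which carry $L^2_t$ dissipation), e.g.\ via maximal regularity for the Stokes part combined with higher-order commutator estimates of the type in \cite{Fe1,Fe2}, and this --- the actual content of Xu--Zhang's Proposition 6.1 --- is missing from your argument.
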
}

  For the given initial
data $(u_0,b_0)$ which satisfies the assumptions of Theorem
\ref{MTH}, we deduce from Proposition \ref{p5.1} that \eqref{A1} has
a unique solution $(u,b)$ on $[0,T^\ast)$ such that for any
$T<T^\ast,$
\beno
b-e_1\in C([0,T]; H^{3}(\R^3)),
\quad  u\in
C([0,T]; H^{3}(\R^3))\quad\textrm{with}\quad
\na u\in L^2(0,T;H^3(\R^3)).
\eeno
Moreover, it follows from the transport equation of \eqref{A1} that
\beno \|b(t)\|_{L^\infty}\leq \|b_0\|_{L^\infty}\exp\left\{\|\na
u\|_{L^1_t(L^\infty)}\right\}. \eeno Therefore, by virtue of
Proposition \ref{p5.1}, in order to complete the existence part of
Theorem \ref{MTH}, it remains to prove that
\beq \label{e5.2}
\int_0^{T^\ast}\|\na u(t)\|_{L^\infty}\,\mathrm{d}t<\infty.
\eeq
  Indeed, by \eqref{es2},  it
follows from the Lagrangian formulation   that for any
$T<T^\ast$,
\begin{align*}
\int_0^{T}\|\na u(t)\|_{L^\infty}\,\mathrm{d}t&\leq
\int_0^{T}\bigl\|A\na_y Y_t(t)\|_{L^\infty}\,\mathrm{d}t \\
&\lesssim \bigl(1+\|\na Y\|_{L^\infty_{t,y}}\bigr)
\int_0^T \|\na_y Y_t(t)\|_{L^\infty}\,\mathrm{d}t\\
&\lesssim \int_0^T \|\nabla^2 Y_t\|_{H^1}^{\frac{5}{6}}\|\partial_1\nabla^2 Y_t\|_{H^1}^{\frac{1}{6}} \,\mathrm{d}t\\
&\lesssim \int_0^T (1+t)^{-\frac{7}{12}}
\big((1+t)^{\f12}\|\Delta Y_t\|_{H^2}\big)^{\frac{5}{6}}
\big({(1+t)}\|\partial_1\Delta Y_t\|_{H^1}\big)^{\frac{1}{6}}\,\mathrm{d}t\\
&\lesssim \mathcal{E}^{\f12} \lesssim \epsilon_0^{\f12}.
\end{align*}
This verifies \eqref{e5.2} and thus  Theorem \ref{MTH} is proved.

\section*{Acknowledgement}
We would like to thank Professor Fanghua Lin for profitable  suggestions and encouragements on this topic. 
Cai was supported in part by NSFC grants (no. 12201122)
and by the Initiative Funding for New Researchers, Fudan University. Han was partially supported by Zhejiang Province Science fund (no. LY21A010009). Zhao was  supported in part by  NSFC grants (no. 12301256), Shanghai Sailing Program (no.
22YF1412300), a funding from Laboratory  of Computational  Physics  (no. 6142A05QN21007) and the Shanghai University of Finance and Economics startup funding.

\end{document}